\newcommand{\ovline}{\overline}
\newcommand{\wtilde}{\widetilde}
\newcommand{\rarr}{\rightarrow}
\newcommand{\larr}{\leftarrow}
\newcommand{\xrarr}{\xrightarrow}
\newcommand{\lrarr}{\leftrightarrow}
\newcommand{\Rarr}{\Rightarrow}
\newcommand{\LRarr}{\Leftrightarrow}
\newcommand{\bsl}{\backslash}
\newcommand{\nullset}{\varnothing}
\newcommand{\lesq}{\leqslant}
\newcommand{\greq}{\geqslant}
\newcommand{\veps}{\varepsilon}
\newcommand{\intr}{\mathrm{Int}\,}
\newcommand{\diam}{\mathrm{diam}}
\newcommand{\inprod}[2]{{\langle} #1, #2 {\rangle}}
\newcommand{\inprodbig}[2]{\bigl\langle #1, #2 \bigr\rangle}
\newcommand{\forceparindent}{\hskip 1.5em}
\newtheorem{prop}{Proposition}
\newtheorem{lemma}{Lemma}
\newtheorem{theorem}{Theorem}
\newcounter{Remarknum}
\newenvironment{remark}{\refstepcounter{Remarknum}{\bf Remark \arabic{Remarknum}.}}{\par}
\newcounter{Examplnum}
\newenvironment{example}{\refstepcounter{Examplnum}{\bf Example \arabic{Examplnum}. $\vartriangleleft$}}{$\vartriangleright$\par}
\begin{document}

\title{On covering a ball by congruent subsets in normed spaces}

\author{Sergij V. Goncharov\thanks{Faculty of Mechanics and Mathematics, Oles Honchar Dnipro National University, 72 Gagarin Avenue, 49010 Dnipro, Ukraine.
\textit{E-mail: goncharov@mmf.dnulive.dp.ua}}}

\date{August 2017}

\maketitle

\begin{abstract}
We consider the covering of a ball in certain normed spaces by its congruent subsets
and show that if the finite number of sets is not greater than the dimensionality of the space,
then the centre of the ball either belongs to the interior of each set, or doesn't belong to the interior of any set.
We also provide some examples when it belongs to the interior of exactly one set.
These are the specific cases of the modified problem originally posed for dissection.
\end{abstract}

\centerline{\small \textit{MSC2010:} Pri 46B20, Sec 46C05, 52C17, 52C22}

\centerline{\small \textit{Keywords:} ball, normed space, Hilbert space, cover, subset, congruence, motion, centre, interior, LSB}

\makeatletter
\@starttoc{toc} 
\makeatother

\section*{Introduction}
\addcontentsline{toc}{section}{Introduction}

\forceparindent
In \cite[C6, p. 87]{croft1991} the problem, attributed to S.K. Stein, is posed:
``Whether it is possible to partition the unit circle into congruent pieces so that the center is in the interior of one of the pieces?''.
At present, for arbitrary number of pieces it is considered to be unsolved (\cite{mathoverflow17313}).
It can be generalized and varied in many ways, as stated in same place (\cite[p. 88]{croft1991}), not only dimensionality.

Some related questions were studied and answered more or less fully, ---
\cite{douwen1993}, \cite{edelstein1988}, \cite{haddley2016}, \cite{kiss2016}, \cite{richter2008}, \cite{wagon1983} to name a few.
A problem of this kind may depend greatly on the meaning of involved terms like ``piece'', ``partition'', ``congruence'':
do we allow the pieces to intersect at boundaries?
does congruence include reflection? should the piece be connected? measurable?
For example, it is shown in \cite{wagon1983} that the ball in $\mathbb{R}^m$ cannot be ``strictly'' dissected
into $n\in [2;m]$ topologically congruent pieces, to say nothing of the centre; see also \cite{waerden1949}, \cite[25.A.6, p. 599]{gleason1980}, \cite{edelstein1988}.

Hereinafter, we distinguish between 3 types of ``decomposition'' of the set $B$ (in particular, the~ball) into the congruent (sub)sets $\{ A_i \}_{i\in I}$,
so that $B = \bigcup\limits_{i \in I} A_i$ (cf. \cite[p. 79]{croft1991}, \cite[p. 49]{hertel2003}):

$\bullet$ {\it partition}: $\{ A_i \}$ are pairwise disjoint;

$\bullet$ {\it dissection}: interiors of $\{ A_i \}$ are pairwise disjoint;

$\bullet$ {\it covering} (or {\sl intra}{\it covering} to emphasize $A_i \subseteq B$): no additional constraints are {\sl required}.

These terms aren't ``standardized'', and may have quite different meaning in other works.

Any partition is a dissection, and any dissection is a covering. Therefore, the impossibility of covering satisfying certain additional conditions (e.g. relating to the centre)
implies that dissection and partition satisfying the same conditions are not possible as well.
However, when such covering exists, the corresponding dissection or partition may not exist.

Here we consider the ``decomposition'' of (intra)covering type, in certain specific cases, while the original problem almost surely belongs to dissection type;
and the majority of referenced papers, temporally ordered from \cite{waerden1949} to \cite{kiss2016}, deals with partition.

{\ }

{\sl The routinism of inference suggests few/some/most/all of the presented ``results'' to be well known, even if not claimed explicitly or publicly,
and the aim is rather to remove the delusion that there's no such well-knowness...}
There are works concerning the original centre-in-interior dissection problem, under ``natural'' (or ``physical'') assumptions
(such as the space being Euclidean, boundaries of parts being rectifiable, parts being connected): cf. \cite{haddley2016}, \cite{kanelbelov2002}, \cite{banakh2010}.

In our opinion, the most similar negative result relating to covering is obtained for pre-Hilbert spaces in \cite[Th. 1.1]{douwen1993}:
in spite of ``indivisibility'' and ``partitioning'' terms, it is actually a covering considered there, under the assumption that exactly one set contains the centre.
See Rem. \ref{remIneqCounterEx} below.

{\ }

We try to attain the generality by considering spaces and coverings with as few additional properties and constraints as possible.
Thereby few different interpretations of the problem (the ball is closed/open etc.) are aggregated.

{\ }

Hereinafter, we consider a normed linear space $X$ over the field of reals $\mathbb{R}$, $\theta$ is the zero of $X$.
Where we need the specific space such as $\mathbb{R}^m$, we will note it.

Completeness of $X$ is not assumed.

$\| x \|$ is the norm of $x\in X$, inducing the metric $\rho (x, y) = \| x - y \|$.

The balls: open $B(x,r) = \{ y\in X\colon \| y - x \| < r \}$, closed $\ovline{B}(x,r) = \{ y\in X\colon \| y - x \| \lesq r \}$;
the (closed) sphere $S(x,r) = \{ y\in X\colon \| y - x \| = r \}$. $r > 0$ is assumed.

We call the sets $A\subseteq X$ and $B\subseteq X$ {\it congruent}, $A\cong B$, iff there is an isometric surjective mapping ({\it motion})
$f\colon X \lrarr X$: $\forall x, y \in X$: $\| f(x) - f(y) \| = \| x - y \|$ (surjectivity implies that $f^{-1} \colon X \lrarr X$ is a motion too), and $f(A) = B$.
The identity map $\mathcal{I}$: $\mathcal{I}(x) = x$ is a motion.

$\intr A = \{ x\in A\mid \exists \veps > 0\colon B(x,\veps) \subseteq A \}$ and
$\ovline{A} = \{ x \in X \mid \forall \veps > 0 \colon B(x, \veps) \cap A \ne \nullset \}$ are the interior and the closure of $A$, respectively.

{\ }

We assume that $X$ has these additional properties:

$\bullet$ $\dim X > 1$: $\exists a, b \in X$, which are linearly independent.

$\bullet$ NCS: $\| \cdot \|$ is strictly convex, that is, $\forall x, y \in S(\theta, 1)$, $x \ne y$: $\lambda \in (0;1)$ $\Rarr$ $\| \lambda x + (1 - \lambda ) y\| < 1$.

Conventional examples of non-NCS space are $\mathbb{R}^m_1$ and $\mathbb{R}^m_{\infty}$ when $m \greq 2$, or $L_1$ and $L_{\infty}$.

\section{Preliminaries}

\forceparindent
Watery Warning: some of the following lemmas seem ``folkloric'', with proofs included for the sake of integrity and probably present elsewhere.

\begin{lemma}\label{lmMotionSph}
If $f\colon X \lrarr X$ is a motion, then $\forall S(x,r)$: $f\bigl( S(x,r)  \bigr) = S(f(x), r)$.
\end{lemma}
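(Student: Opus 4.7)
The plan is to prove the set equality by establishing both inclusions, using only the isometry property for one direction and surjectivity (equivalently, the existence of the inverse motion) for the other.

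For the inclusion $f(S(x,r)) \subseteq S(f(x),r)$, I would take an arbitrary $y \in S(x,r)$, so $\|y - x\| = r$ by definition, and then apply the defining isometric property of $f$ to get $\|f(y) - f(x)\| = \|y - x\| = r$, which places $f(y)$ in $S(f(x), r)$. This direction uses nothing beyond $f$ being an isometry and the definition of the sphere.

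For the reverse inclusion $S(f(x), r) \subseteq f(S(x,r))$, I would take $z \in S(f(x), r)$, so $\|z - f(x)\| = r$. Here surjectivity is essential: it provides a preimage $y \in X$ with $f(y) = z$. The author has already noted that surjectivity of $f$ implies that $f^{-1}$ is itself a motion, so applying the isometric property (either to $f$ written as $\|f(y) - f(x)\| = \|y - x\|$, or equivalently to $f^{-1}$) yields $\|y - x\| = \|z - f(x)\| = r$, i.e., $y \in S(x,r)$, hence $z = f(y) \in f(S(x,r))$. Alternatively, one can simply invoke the first inclusion with $f^{-1}$ in place of $f$ and with center $f(x)$, which gives $f^{-1}(S(f(x),r)) \subseteq S(x,r)$; applying $f$ to both sides and using surjectivity recovers the desired inclusion.

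There is no real obstacle here — the lemma is a direct unpacking of the definitions of motion and sphere, which is presumably why the author flagged this section as ``folkloric.'' The only point requiring any care is to be explicit about where surjectivity enters, since without it one would only obtain $f(S(x,r)) \subseteq S(f(x), r)$ and not equality. This lemma will serve later as the basic compatibility statement between motions and the metric structure, letting one transport spheres (and by extension balls, via intersections or complements) under arbitrary congruences.
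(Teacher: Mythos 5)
Your proposal is correct and follows essentially the same two-inclusion argument as the paper: isometry gives $f(S(x,r)) \subseteq S(f(x),r)$, and surjectivity supplies the preimage needed for the reverse inclusion. No differences worth noting.
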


\begin{proof}
a) $\forall y \in S(x,r)$: $\| f(y) - f(x) \| = \| y - x \| = r$ $\Rarr$ $f\bigl(S(x,r)\bigr) \subseteq S(f(x),r)$.
b) $\forall z\in S(f(x),r)$: $z = f(y)$, $\| y - x \| = \| f(y) - f(x) \| = \| z - f(x) \| = r$ $\Rarr$ $y \in S(x,r)$ $\Rarr$ $S(f(x), r) \subseteq f\bigl(S(x,r)\bigr)$.
\end{proof}

\begin{lemma}\label{lmMotionBall}
If $f\colon X \lrarr X$ is a motion, then $\forall B(x,r)$: $f\bigl( B(x,r)  \bigr) = B(f(x), r)$.
\end{lemma}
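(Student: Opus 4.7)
The plan is to mimic the structure of the proof of Lemma \ref{lmMotionSph} almost verbatim, replacing the equality $\| y - x \| = r$ by the strict inequality $\| y - x \| < r$ throughout. The key observation is that the defining inequality of an open ball is preserved exactly by the isometry condition on $f$.

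First I would establish the inclusion $f\bigl(B(x,r)\bigr) \subseteq B(f(x),r)$: take any $y\in B(x,r)$, so $\|y-x\|<r$, and use the isometry property to get $\|f(y)-f(x)\|=\|y-x\|<r$, which puts $f(y)$ in $B(f(x),r)$. This half uses only that $f$ is a motion and does not invoke surjectivity.

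Next I would establish the reverse inclusion $B(f(x),r)\subseteq f\bigl(B(x,r)\bigr)$: given $z\in B(f(x),r)$, use surjectivity of $f$ to write $z=f(y)$ for some $y\in X$, and then compute $\|y-x\|=\|f(y)-f(x)\|=\|z-f(x)\|<r$, so $y\in B(x,r)$ and hence $z\in f\bigl(B(x,r)\bigr)$. Here, as in Lemma \ref{lmMotionSph}, surjectivity is essential because without it we would only know that every preimage we happen to have lies in $B(x,r)$, not that $z$ has a preimage at all.

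No step is a genuine obstacle; the only thing worth noting is that one could alternatively deduce this lemma directly from Lemma \ref{lmMotionSph} by writing $B(x,r)=\bigcup_{0\le s<r} S(x,s)$ (taking $S(x,0)=\{x\}$) and using $f\bigl(\bigcup_s S(x,s)\bigr)=\bigcup_s f\bigl(S(x,s)\bigr)=\bigcup_s S(f(x),s)=B(f(x),r)$, but the direct two-inclusion argument is shorter and parallels Lemma \ref{lmMotionSph} cleanly, so I would just write that.
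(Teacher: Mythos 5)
Your argument is correct, but it is not the route the paper takes: the paper's proof is exactly the alternative you mention in passing, namely writing $B(x,r) = \{x\} \cup \bigcup_{u\in(0;r)} S(x,u)$, pushing $f$ through the union, and invoking Lemma \ref{lmMotionSph} on each sphere. Your direct two-inclusion argument is self-contained (it never uses Lemma \ref{lmMotionSph}) and is arguably cleaner, since it avoids the small notational wrinkle that the paper's convention defines $S(x,r)$ only for $r>0$, forcing the centre $\{x\}$ to be split off as a separate term (your shorthand $S(x,0)=\{x\}$ papers over the same point). What the paper's version buys is economy of ideas --- the ball statement becomes a one-line corollary of the sphere statement already proved --- at the cost of a set-theoretic identity ($f$ of a union is the union of the images, which needs no surjectivity, plus the sphere decomposition of the open ball). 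Both proofs are complete; you correctly identify that surjectivity is needed only for the reverse inclusion, which in the paper's version is hidden inside Lemma \ref{lmMotionSph}.
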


\begin{proof}
$f\bigl( B(x,r) \bigr) = f\bigl( \{ x \} \cup \bigcup\limits_{u\in (0;r)} S(x,u) \bigr) = \{ f(x) \} \cup \bigcup\limits_{u\in (0;r)} f\bigl( S(x,u) \bigr)
\stackrel{\text{Lemma \ref{lmMotionSph}}}{=}$

\hfill $= \{ f(x) \} \cup \bigcup\limits_{u\in (0;r)} S\bigl( f(x), u \bigr) = B\bigl( f(x), r \bigr)$
\end{proof}

\begin{lemma}\label{lmMotionDecomp}
Let $f\colon X \lrarr X$ be a motion. Then $f = h \circ g$ (that is, $f(x) = h\bigl(g(x)\bigr)$),
where $g\colon X \lrarr X$ and $h\colon X \lrarr X$ are uniquely determined motions such that

1) $\forall x \in X$: $\| g(x) \| = \| x \|$ ($\LRarr$ $g(\theta) = \theta$);

2) $\exists a \in X$: $\forall x \in X$: $h(x) = x + a$.
\end{lemma}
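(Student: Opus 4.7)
The plan is to define the decomposition explicitly and then verify the required properties and uniqueness.

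First, I would set $a := f(\theta)$ and define $h \colon X \lrarr X$ by $h(x) = x + a$. This is obviously a bijective isometry (a translation), hence a motion, which takes care of property~2). Then I would define $g := h^{-1} \circ f$, so that $g(x) = f(x) - a = f(x) - f(\theta)$ and $f = h \circ g$ by construction. Since $g$ is a composition of motions, it is itself a motion; and $g(\theta) = f(\theta) - f(\theta) = \theta$.

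Next I would verify the equivalence in property~1): if $g(\theta) = \theta$, then for every $x \in X$ the isometry property gives $\| g(x) \| = \| g(x) - g(\theta) \| = \| x - \theta \| = \| x \|$; conversely, if $\| g(x) \| = \| x \|$ for all $x$, then $\| g(\theta) \| = 0$, so $g(\theta) = \theta$. Hence both formulations of property~1) are equivalent, and our $g$ satisfies them.

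Finally, for uniqueness: suppose $f = h_1 \circ g_1 = h_2 \circ g_2$ with $h_i(x) = x + a_i$ and $g_i(\theta) = \theta$. Evaluating at $\theta$ yields $f(\theta) = h_i(g_i(\theta)) = h_i(\theta) = a_i$, so $a_1 = a_2 = f(\theta)$, whence $h_1 = h_2$; composing with $h_1^{-1}$ on the left then gives $g_1 = g_2$.

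I do not expect any serious obstacle here: the decomposition is forced by evaluating $f$ at $\theta$, and everything else is a routine composition-of-motions check. The only point deserving a moment's care is the clean verification of the ``$\LRarr$'' in property~1), which relies critically on the isometric property rather than merely on surjectivity.
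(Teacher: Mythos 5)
Your proposal is correct and follows essentially the same route as the paper: both define $g(x) = f(x) - f(\theta)$ and $h(x) = x + f(\theta)$, verify they are motions, and obtain uniqueness by evaluating at $\theta$. The only cosmetic difference is that you justify $g$ being a motion via closure of motions under composition, whereas the paper checks the isometry and exhibits the inverse directly.
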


\begin{proof}
Consider $g(x) = f(x) - f(\theta)$ and $h(x) = x + f(\theta)$. Obviously, $h \circ g = f$.

$\| g(x) \| = \| f(x) - f(\theta) \| = \| x - \theta \| = \| x \|$. (Implied by $g(\theta) {=} \theta$: $\| g(x) \| {=} \| g(x) {-} g(\theta) \| {=} \| x {-} \theta \|$.)

$g$ and $h$ are motions: $\| g(x) - g(y) \| = \| f(x) - f(y) \| = \| x - y \|$ and $\| h(x) - h(y) \| = \| x - y \|$ (isometry),
inverse maps $g^{-1}(x) = f^{-1}(x + f(\theta))$ and $h^{-1}(x) = x - f(\theta)$ imply surjectivity.

Uniqueness: $f(\theta) = h(g(\theta)) = h(\theta) = a$, $g(x) = h^{-1}(f(x)) = f(x) - a = f(x) - f(\theta)$.
\end{proof}

Here, we call $h$ {\it shift} and $g$ {\it non-shift} components of the motion $f$. If $h=\mathcal{I}$ or $g=\mathcal{I}$, the respective component is called {\it trivial}.
It is easy to see that if $f$ has trivial shift or non-shift component, then the respective component of $f^{-1} = g^{-1} \circ h^{-1}$ is trivial as well.

{\ }

\begin{theorem}\label{thmIsomZero2Zero} (Mazur-Ulam, \cite{mazur1932}; \cite[5.3, Th. 12]{lax2002}).
The motion that maps $\theta$ to $\theta$ is linear.
\end{theorem}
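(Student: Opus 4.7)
The plan is to show that a motion $f \colon X \lrarr X$ with $f(\theta) = \theta$ is $\mathbb{R}$-linear, in steps that rely crucially on the NCS assumption on $X$.

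First I would establish a midpoint uniqueness lemma in strictly convex spaces: the only point $m \in X$ satisfying $\| m - x \| = \| m - y \| = \frac{1}{2}\| x - y \|$ is $m = (x+y)/2$. If $m_1 \ne m_2$ were both such midpoints, their average $z := (m_1 + m_2)/2$ would inherit the same property (two applications of the triangle inequality pinch both $\| z - x \|$ and $\| z - y \|$ to $\tfrac12\|x - y\|$), and writing $v = (y-x)/2$ and $u = m_1 - (x+y)/2$ one finds $\| v + u \| = \| v - u \| = \| v \|$ with $u \ne \theta$, contradicting NCS because $v$ would be the midpoint of two distinct points of $S(\theta, \| v \|)$.

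Next, isometry yields that $f$ preserves midpoints: since $\| f(m) - f(x) \| = \| m - x \| = \frac{1}{2}\| f(x) - f(y) \|$ and analogously for $y$, the point $f(m)$ is a midpoint of $f(x)$ and $f(y)$, hence equals $(f(x) + f(y))/2$ by the uniqueness step. Taking $x = \theta$ and using $f(\theta) = \theta$ gives $f(y/2) = f(y)/2$, after which the midpoint identity rearranges into the Cauchy equation $f(x+y) = f(x) + f(y)$. Standard arithmetic then yields $f(q x) = q f(x)$ for every $q \in \mathbb{Q}$, and continuity of $f$ (automatic for an isometry) upgrades this to $\mathbb{R}$-linearity by approximating an arbitrary real scalar by rationals.

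The main obstacle is the uniqueness step. Without NCS, midpoints need not be unique --- in $\mathbb{R}^2_{\infty}$ a whole line segment can consist of midpoints of two prescribed points --- and one then cannot immediately conclude that $f$ sends the arithmetic midpoint to the arithmetic midpoint. Since NCS is part of the standing hypotheses on $X$, this difficulty evaporates here; for the general Mazur--Ulam theorem (without strict convexity) one has to make a more delicate canonical selection from the set of candidate midpoints, which is how the original proof and its later simplifications proceed.
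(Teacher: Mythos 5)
Your argument is correct, but it is worth pointing out that the paper does not prove this statement at all: Theorem \ref{thmIsomZero2Zero} is imported as the classical Mazur--Ulam theorem, with the proof delegated to \cite{mazur1932} and \cite[5.3, Th. 12]{lax2002}. What you have written is a genuine proof, but of a weaker statement --- the theorem restricted to strictly convex spaces --- since your midpoint-uniqueness lemma is exactly where NCS enters and cannot be dispensed with (in $\mathbb{R}^2_\infty$ the metric midpoints of two points can form a segment, as you note). The full Mazur--Ulam theorem holds in every normed space and requires the more delicate device you allude to (the original ``metric centre'' construction, or V\"ais\"al\"a's reflection trick), which is presumably why the author cites it rather than reproving it. That said, your restricted version suffices for everything in this paper, because NCS is a standing hypothesis on $X$ and the only other space where the theorem is invoked is the Hilbert space $H=l_2$, which is strictly convex; so your route buys a short, self-contained, elementary proof at the cost of generality the paper never uses. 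The details check out: the pinching argument showing that the average of two metric midpoints is again a metric midpoint is sound (though slightly roundabout --- the relations $\|v+u\|=\|v-u\|=\|v\|$ with $u=m-\frac{1}{2}(x+y)\ne\theta$ already contradict NCS for a \emph{single} non-arithmetic midpoint $m$, with no need to average two of them; this is precisely the computation the paper itself performs inside Lemma \ref{lmDiamTrivShift}), midpoint preservation under an isometry fixing $\theta$ gives $f(y/2)=f(y)/2$ and then additivity, and rational homogeneity plus the automatic continuity of an isometry upgrades to $\mathbb{R}$-linearity; surjectivity of the motion is never needed.
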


{\bf Remark.} We consider the isometries that map $X$ onto itself,
while the theorem holds true for any bijective isometry between two normed spaces $X$ (with $\theta_X$) and $Y$ (with $\theta_Y$).

{\bf Corollary.} Non-shift component $g$ of the motion $f$ is linear: $g(\lambda x + \mu y) = \lambda g(x) + \mu g(y)$.

{\ }

\begin{lemma}\label{lmDiamTrivShift}
If the motion $f\colon X \lrarr X$ is such that $\exists x \in X$: $\| f(x) \| \lesq \| x \|$ and $\| f(-x) \| \lesq \| x \|$, then the shift component of $f$ is trivial.
\end{lemma}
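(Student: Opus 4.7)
The plan is to exploit the decomposition $f = h \circ g$ from Lemma \ref{lmMotionDecomp}, the linearity of $g$ from the Mazur--Ulam corollary, and the strict convexity (NCS) of the norm.

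First I would set $a = f(\theta)$, so that by Lemma \ref{lmMotionDecomp} we have $h(y) = y + a$ and $g(\theta) = \theta$, with $g$ linear by Theorem \ref{thmIsomZero2Zero}. Writing $u = g(x)$ one gets $\|u\| = \|x\|$, together with $f(x) = u + a$ and $f(-x) = g(-x) + a = -u + a$. The hypothesis then reads
\[
\|u + a\| \lesq \|u\| \quad \text{and} \quad \|u - a\| \lesq \|u\|.
\]

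Next I would apply the triangle inequality to $2u = (u+a) + (u-a)$:
\[
2\|u\| = \|2u\| \lesq \|u + a\| + \|u - a\| \lesq 2\|u\|,
\]
so all inequalities are equalities, forcing $\|u + a\| = \|u - a\| = \|u\|$.

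The decisive step (and the only one that truly uses the hypotheses on $X$) is to conclude $a = \theta$. If $x = \theta$, then $\|u\| = 0$ and the assumption $\|f(\theta)\| \lesq 0$ already gives $a = f(\theta) = \theta$. If $x \ne \theta$, then $u \ne \theta$, and the two points $u + a$ and $u - a$ both lie on $S(\theta, \|u\|)$; their midpoint is $u$, which has norm exactly $\|u\|$. By NCS this is impossible unless $u + a = u - a$, i.e.\ $a = \theta$. Hence the shift component $h$ of $f$ is the identity, which is the required triviality.

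The main (mild) obstacle I anticipate is simply making sure the application of NCS is correct: rescaling to $S(\theta,1)$ and ruling out the degenerate case $u = \theta$. Once that is in place, everything else is a direct consequence of Lemma \ref{lmMotionDecomp} and the triangle inequality.
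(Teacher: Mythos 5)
Your proposal is correct and follows essentially the same route as the paper: decompose $f = h\circ g$ via Lemma \ref{lmMotionDecomp}, use Mazur--Ulam to get $f(-x) = -g(x) + a$, force $\|u+a\| = \|u-a\| = \|u\|$ by the triangle inequality, and invoke NCS (after rescaling to the unit sphere) to conclude $a = \theta$. The only difference is cosmetic — you phrase the triangle-inequality step directly rather than by contradiction — so there is nothing to add.
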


\begin{proof}
Using the notation of Lemma \ref{lmMotionDecomp}, let $f = h \circ g$ and $y = g(x)$.
For $x = \theta$: $y = \theta$, so $f(x) = a$, and $\| a \| \lesq 0$ $\LRarr$ $a = \theta$. Suppose $x \ne \theta$.

By Th. \ref{thmIsomZero2Zero}, $-y = g(-x)$, so $f(x) = y + a$ and $\| y + a \| \lesq \| x \| = \| y \|$,
$f(-x) = -y + a$ and $\| -y + a\| \lesq \| y \|$ $\LRarr$ $\| y - a \| \lesq \| y \|$.
If $\| y + a\| < \| y \|$ or $\| y - a \| < \| y \|$, then by triangle inequality $2 \| y \| = \| y - (-y) \| \lesq \| y - a \| + \| a - (-y) \| < 2 \| y \|$, --- a contradiction;
thus $\| y - a \| = \| y + a \| = \| y \|$.

$y = \frac{1}{2}(y-a) + \frac{1}{2}(y+a)$. Assume $a \ne \theta$ $\LRarr$ $y - a \ne y + a$. For $s = (y-a) / \| y \|$, $t = y / \| y \|$, $u = (y+a) / \| y \|$:
$s,t,u \in S(\theta, 1)$, $s \ne u$, $\| \frac{1}{2}s + \frac{1}{2}u \| = \| t \| = 1$, contradicting NCS. So $a = \theta$.
\end{proof}

{\ }

Let $a_1$, ..., $a_m$ be linear independent (LI) elements of $X$ (thus $\dim X \greq m$).
We denote by $M(a_1, ..., a_m) = \bigl\{ \sum\limits_{i=1}^m x_i a_i \mid x_i \in \mathbb{R} \bigr\}$ the $m$-dimensional linear manifold generated by them.
It follows from LI that $\forall x \in M(a_1, ..., a_m)$ the coordinates $\{ x_i \}$ are determined uniquely.
Suppose $x^{(k)}, y \in M(a_1, ..., a_m)$. Since $\| x^{(k)} - y \| \lesq \sum\limits_{i=1}^m |x^{(k)}_i - y_i| \cdot \| a_i \|$ by triangle inequality,
we immediately see that $x^{(k)}_i \xrarr[k\rarr \infty]{} y_i$ for $i=\ovline{1,m}$
implies $x^{(k)} \xrarr[k\rarr \infty]{} y$, that is, $\| x^{(k)} - y \| \xrarr[k\rarr \infty]{} 0$.

The converse implication and the closedness of $M(a_1, ..., a_m)$ (making it a subspace of $X$),
though known well enough (see \cite[1.2.3]{cotlar1974}, \cite[5.2, Ex. 4]{lax2002}), are obtained in the next lemma by ``elementary'' reasonings,
without resort to norm equivalence or functionals.

\begin{lemma}\label{lmFinDimManifClosed}
If $x^{(k)} \in M(a_1, ..., a_m)$ and $x^{(k)} \xrarr[k\rarr \infty]{} x \in X$,
then $x \in M(a_1, ..., a_m)$, which is closed therefore, and $x^{(k)}_i \xrarr[k\rarr \infty]{} x_i$ for $i=\ovline{1,m}$.
\end{lemma}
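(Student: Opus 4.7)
The plan is to proceed by induction on $m$, peeling off one coordinate at a time; the coordinate-convergence and the $x \in M(a_1,\ldots,a_m)$ halves of the conclusion actually feed one another through the induction, so neither can be proved in isolation.

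For $m=1$ the argument is immediate: $\| x^{(k)} - x^{(j)} \| = |x^{(k)}_1 - x^{(j)}_1| \cdot \| a_1 \|$, so the convergence (hence Cauchyness) of $\{ x^{(k)} \}$ makes $\{ x^{(k)}_1 \}$ Cauchy in $\mathbb{R}$; its limit $x_1$ satisfies $x^{(k)}_1 a_1 \rarr x_1 a_1$, and uniqueness of limits forces $x = x_1 a_1 \in M(a_1)$.

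For the inductive step from $m$ to $m+1$ the key ingredient is the following lower bound. By the inductive hypothesis $M(a_1, \ldots, a_m)$ is closed in $X$, and since $a_{m+1} \notin M(a_1, \ldots, a_m)$ by linear independence, the distance $d := \inf \{ \| a_{m+1} - u \| : u \in M(a_1, \ldots, a_m) \}$ is strictly positive --- were $d = 0$, $a_{m+1}$ would be a limit of points of the closed manifold $M(a_1, \ldots, a_m)$ and would thus lie in it, contradicting linear independence. Scaling and the linearity of $M(a_1, \ldots, a_m)$ then yield $\| y + t a_{m+1} \| \geqslant |t| \cdot d$ for every $y \in M(a_1, \ldots, a_m)$ and every $t \in \mathbb{R}$ (for $t\ne 0$ apply the defining inequality to $-y/t \in M(a_1,\ldots,a_m)$). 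Writing $y^{(k)} = \sum_{i=1}^m x^{(k)}_i a_i$ and applying this lower bound to the difference $x^{(k)} - x^{(j)} = (y^{(k)} - y^{(j)}) + (x^{(k)}_{m+1} - x^{(j)}_{m+1}) a_{m+1}$ shows that $\{ x^{(k)}_{m+1} \}$ is Cauchy in $\mathbb{R}$, with some limit $x_{m+1}$; then $y^{(k)} = x^{(k)} - x^{(k)}_{m+1} a_{m+1}$ converges in $X$, and invoking the inductive hypothesis on $\{ y^{(k)} \}$ yields both $x - x_{m+1} a_{m+1} \in M(a_1, \ldots, a_m)$ and the convergence of the remaining coordinates, which together give the desired conclusion.

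The main (and in fact only) obstacle is the strict positivity of $d$: this is exactly where the closedness half of the inductive hypothesis is indispensable, for without it $d$ could vanish and the lower bound $\| y + t a_{m+1} \| \geqslant |t| d$ would degenerate, breaking the induction. Everything else reduces to routine Cauchy-sequence manipulations in $\mathbb{R}$ together with one further appeal to the inductive hypothesis applied to the ``tail'' $y^{(k)}$.
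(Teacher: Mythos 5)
Your proof is correct and follows essentially the same route as the paper: induction on $m$, with the key step being that the distance from the new generator to the closed (by the inductive hypothesis) lower-dimensional manifold is strictly positive, which forces the corresponding coordinate sequence to be Cauchy. The only cosmetic difference is that you peel off the last coordinate and recurse on the tail $y^{(k)}$, whereas the paper repeats the distance argument ``similarly'' for each coordinate and then assembles the limit directly; both are sound.
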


\begin{proof}
The proof is by induction over $\dim M(a_1, ..., a_m)$.

Let $m = 1$. The sequence $\{ x^{(k)} \}_k = \{ x^{(k)}_1 a_1 \}_k$ is convergent (conv.), therefore it is fundamental (fund.)
Assume that $\{ x^{(k)}_1 \}_k$ is not conv., then it isn't fund. due to completeness of $\mathbb{R}$:

\centerline{$\exists \veps_0 > 0$: $\forall N \in \mathbb{N}$: $\exists k_1, k_2 > N$: $|x^{(k_1)}_1 - x^{(k_2)}_1| \greq \veps_0$}

But then $\| x^{(k_1)} - x^{(k_2)} \| = | x^{(k_1)}_1 - x^{(k_2)}_1 | \cdot \| a_1 \| \greq \veps_0 \| a_1 \| > 0$, which contradicts the fund. of $\{ x^{(k)} \}_k$.
Hence $\exists \lim\limits_{k\rarr \infty} x^{(k)}_1 = \wtilde{x}_1$. Let $\wtilde{x} = \wtilde{x}_1 a_1$.
$\| x^{(k)} - \wtilde{x} \| = |x^{(k)}_1 - \wtilde{x}_1| \cdot \| a_1 \| \xrarr[k\rarr \infty]{} 0$ $\Rarr$ $x^{(k)} \rarr \wtilde{x}$ as $k\rarr \infty$.
This means that $x = \wtilde{x} \in M(a_1)$ and $x^{(k)}_1 \rarr x_1$ as $k\rarr \infty$.

Now suppose that the statement holds true for $\dim M\bigl(\{ a_i \}\bigr) = 1, 2, ..., m-1$.
Consider the conv. $\{ x^{(k)} \}_k = \{ \sum\limits_{i=1}^m x^{(k)}_i a_i \}_k$, it is fund.
Take any $i_0 = \ovline{1,m}$, for instance $i_0 = m$. Assume that $\{ x^{(k)}_m \}$ isn't conv., then it isn't fund.,
$\exists \veps_0 > 0$: $\forall N \in \mathbb{N}$: $\exists k_1, k_2 > N$: $|x^{(k_1)}_m - x^{(k_2)}_m| \greq \veps_0$, and

\centerline{$\| x^{(k_1)} - x^{(k_2)} \| = \bigl\| \sum\limits_{i=1}^m (x^{(k_1)}_i - x^{(k_2)}_i) a_i \bigr\| =$}

\centerline{$= |x^{(k_1)}_m - x^{(k_2)}_m| \cdot \bigl\| a_m + \sum\limits_{i=1}^{m-1} \frac{x^{(k_1)}_i - x^{(k_2)}_i}{x^{(k_1)}_m - x^{(k_2)}_m} a_i \bigr\| =
|x^{(k_1)}_m - x^{(k_2)}_m| \cdot \| a_m - z \|$}

\noindent
where $z \in M (a_1, ..., a_{m-1}) = M_{m-1}$. It follows from $\dim M_{m-1} = m-1$ and the induction hypothesis that $M_{m-1}$ is closed.
$a_m \notin M_{m-1}$ due to LI, therefore $\| a_m - z \| \greq \rho (a_m, M_{m-1}) > 0$. And we obtain
$\| x^{(k_1)} - x^{(k_2)} \| \greq \veps_0 \rho (a_m, M_{m-1}) > 0$,
which contradicts the fund. of $\{ x^{(k)} \}_k$. Hence $\exists \lim\limits_{k\rarr \infty} x^{(k)}_m = \wtilde{x}_m$,
and similarly $\exists \lim\limits_{k\rarr \infty} x^{(k)}_i = \wtilde{x}_i$ for $i = \ovline{1,m-1}$. Let $\wtilde{x} = \sum\limits_{i=1}^m \wtilde{x}_i a_i$.

$\| x^{(k)} - \wtilde{x} \| \lesq \sum\limits_{i=1}^m |x^{(k)}_i - \wtilde{x}_i| \cdot \| a_i \| \xrarr[k\rarr \infty]{} 0$, so $x^{(k)} \xrarr[k\rarr \infty]{} \wtilde{x}$.
Consequently, $x = \wtilde{x} \in M(a_1, ..., a_m)$ and $x^{(k)}_i \xrarr[k\rarr \infty]{} x_i$ for $i = \ovline{1,m}$.
By induction principle, the statement is true for $\forall m \in \mathbb{N}$.
\end{proof}

{\ }

\begin{theorem}\label{thmLSB} (Lusternik-Schnirelmann-Borsuk (LSB), \cite[II.5]{lusternik1930}, \cite{borsuk1933}; see also \cite{matousek2008}).
Let the sphere $S_m = \bigl\{ x\in \mathbb{R}^m \colon \| x \|_m = \sqrt{\sum\limits_{j=1}^m x_j^2} = r \bigr\} = \bigcup\limits_{i=1}^m A_i$, where $A_i$ are closed.

Then $\exists i_0$, $\exists x\in S_m$: $\{ x, -x \} \subseteq A_{i_0}$, --- one of $A_i$ contains the pair of antipodal points of $S_m$.
\end{theorem}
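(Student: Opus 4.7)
The plan is to reduce to the Borsuk--Ulam antipodal theorem, which asserts that every continuous map $\varphi \colon S_m \to \mathbb{R}^{m-1}$ identifies at least one antipodal pair, i.e.\ $\exists x_0 \in S_m$ with $\varphi(x_0) = \varphi(-x_0)$. I would take this as a black box and argue by contradiction from it.

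Assume that no $A_i$ contains an antipodal pair, i.e.\ $A_i \cap (-A_i) = \nullset$ for every $i = \ovline{1,m}$. Construct the auxiliary map
$\varphi \colon S_m \to \mathbb{R}^{m-1}$ by
\[
\varphi(x) = \bigl(\rho(x, A_1),\, \rho(x, A_2),\, \ldots,\, \rho(x, A_{m-1})\bigr),
\]
where $\rho(x, A_i) = \inf_{y \in A_i} \| x - y \|_m$. Each coordinate function is $1$-Lipschitz and hence continuous, so $\varphi$ is continuous. By Borsuk--Ulam, pick $x_0 \in S_m$ with $\varphi(x_0) = \varphi(-x_0)$, so $\rho(x_0, A_i) = \rho(-x_0, A_i)$ for $i = \ovline{1, m-1}$.

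Now split into two cases. If $\rho(x_0, A_{i_0}) = 0$ for some $i_0 \lesq m - 1$, then closedness of $A_{i_0}$ forces $x_0 \in A_{i_0}$, while the equality of distances gives $-x_0 \in A_{i_0}$ likewise, contradicting the assumption. Otherwise $\rho(x_0, A_i) > 0$ for all $i \lesq m-1$, so $x_0 \notin A_i$ for those indices; since $\{ A_i \}_{i=1}^m$ covers $S_m$, we must have $x_0 \in A_m$, and by the same argument (swapping $x_0 \leftrightarrow -x_0$) also $-x_0 \in A_m$, again a contradiction. In both cases the assumption fails, producing the desired $i_0$ and $x \in S_m$.

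The real work is of course inside Borsuk--Ulam itself; the reduction above is a standard covering-to-antipodes trick, and the closedness hypothesis on the $A_i$ enters exactly at the step where $\rho(x_0, A_i) = 0$ must be upgraded to actual membership. Since establishing Borsuk--Ulam needs degree theory or a combinatorial/simplicial argument (e.g.\ Tucker's lemma) which is well outside the scope of the ``preliminaries'' here, I would, as the author apparently does, simply cite \cite{lusternik1930}, \cite{borsuk1933}, and \cite{matousek2008} rather than reconstruct the proof.
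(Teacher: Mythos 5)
The paper states this theorem as a classical result with citations only and supplies no proof of its own, so there is no internal argument to compare yours against; what you have written is strictly additional content. That said, your reduction is the standard one and it is correct. The dimension count works out: $S_m$ as defined here is the $(m-1)$-dimensional sphere in $\mathbb{R}^m$, and your map $\varphi(x) = \bigl(\rho(x,A_1),\dots,\rho(x,A_{m-1})\bigr)$ lands in $\mathbb{R}^{m-1}$, which is exactly the setting of Borsuk--Ulam. Continuity via the $1$-Lipschitz property of $\rho(\cdot,A_i)$ is fine, the two cases (some $\rho(x_0,A_{i_0})=0$ for $i_0\lesq m-1$, versus all positive, which forces $x_0$ and $-x_0$ into $A_m$ by the covering property) are exhaustive, and closedness of the $A_i$ is invoked at precisely the one step where it is needed. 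The only substantive remark is that all of the topological content is displaced into Borsuk--Ulam, which you take as a black box; since the author takes the even stronger covering statement as a black box, deferring to \cite{borsuk1933} and \cite{matousek2008} at that point is entirely consistent with the paper's level of rigor. One could quibble that the theorem as stated (and as the paper uses it in Lemma \ref{lmLSBNormSpace}) only requires the $A_i$ closed, and your proof covers exactly that case, so nothing is lost.
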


{\ }

The immediate corollary of LSB theorem is this generalization for normed spaces:

\begin{lemma}\label{lmLSBNormSpace}
Let $\dim X \greq m \in \mathbb{N}$, that is, $\exists a_1, ..., a_m \in X$, which are linearly independent.

If $S(\theta, r) = \bigcup\limits_{i=1}^m A_i$, where $A_i$ are closed, then $\exists A_{i_0}$, $\exists x \in S(\theta, r)$: $\{ x, -x \} \subseteq A_{i_0}$.
\end{lemma}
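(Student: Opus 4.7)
The natural strategy is to pull the cover back to the standard Euclidean sphere in $\mathbb{R}^m$ and invoke Theorem~\ref{thmLSB}. I work inside the linear manifold $M := M(a_1, \ldots, a_m)$, which is closed in $X$ by Lemma~\ref{lmFinDimManifClosed}. The coordinate map $\phi\colon M \rarr \mathbb{R}^m$, $\phi\bigl(\sum_{i=1}^m x_i a_i\bigr) = (x_1, \ldots, x_m)$, is a linear bijection and in fact a homeomorphism: the paragraph preceding Lemma~\ref{lmFinDimManifClosed} establishes that coordinate convergence implies $\| \cdot \|$-convergence, while Lemma~\ref{lmFinDimManifClosed} itself provides the converse.

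The cover then restricts to $M$: the set $T := S(\theta, r) \cap M$ is closed, and the sets $B_i := A_i \cap T$ are closed in $T$ with $T = \bigcup_{i=1}^m B_i$. I construct an odd homeomorphism $\psi\colon T \rarr S_m$ by radial rescaling in coordinates, namely $\psi(x) = r\, \phi(x) / \| \phi(x) \|_m$, with inverse $\psi^{-1}(y) = r\, \phi^{-1}(y) / \| \phi^{-1}(y) \|$. Neither denominator vanishes (the only zero of $\phi$ or $\phi^{-1}$ is $\theta$, which lies on neither sphere), continuity in both directions follows from the homeomorphism property of $\phi$ together with continuity of the two norms, and oddness follows from the linearity of $\phi$. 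Consequently $C_i := \psi(B_i)$ form a closed cover of $S_m$ by $m$ sets.

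Applying Theorem~\ref{thmLSB} yields an index $i_0$ and a point $y \in S_m$ with $\{ y, -y \} \subseteq C_{i_0}$. Setting $x := \psi^{-1}(y) \in T \subseteq S(\theta, r)$, oddness gives $\psi^{-1}(-y) = -x$, whence $\{ x, -x \} \subseteq B_{i_0} \subseteq A_{i_0}$, as required. The only substantive technical point is the bicontinuity of $\phi$, but this has been prepared by the material preceding and within Lemma~\ref{lmFinDimManifClosed}; beyond that, no genuine obstacle remains other than careful bookkeeping in the passage from $(M, \| \cdot \|)$ to $(\mathbb{R}^m, \| \cdot \|_m)$.
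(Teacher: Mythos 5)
Your proposal is correct and follows essentially the same route as the paper: restrict the cover to the closed finite-dimensional manifold $M(a_1,\ldots,a_m)$, transport it to the Euclidean sphere $S_m$ via the coordinate-based radial homeomorphism (whose bicontinuity rests on Lemma~\ref{lmFinDimManifClosed} and the preceding paragraph), apply Theorem~\ref{thmLSB}, and pull the antipodal pair back using oddness of the map. The only differences are cosmetic (you normalize $S_m$ to radius $r$ rather than $1$, and factor the map through $\phi$ explicitly), so nothing further is needed.
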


See \cite[p. 119]{bollobas2006}, and most likely it's mentioned in \cite{steinlein1985}; more general form is in e.g. \cite[p. 39]{arandjelovic1999}.

\begin{proof}
Let $L = M(a_1, ..., a_m)$ be the subspace of $X$ generated by $\{ a_i \}$ (by Lemma \ref{lmFinDimManifClosed}, $L$ is closed),
$C = S(\theta, r) \cap L$, $S_m = \bigl\{ y \in \mathbb{R}^m \colon \| y \|_m = 1 \bigr\}$.
$\forall x\in L$ has the unique representation $x = (x_1; ...; x_m) = \sum\limits_{i=1}^m x_i a_i$.
Therefore the mapping $s \colon C \rarr S_m$: $s(x) = \bigl( x_1 / \| x \|_m ; ...; x_m / \| x \|_m \bigr)$ is well defined.
Moreover, we claim that $s$ is a homeomorphism.

1) $s$ is injective. Indeed, if $s(x') = s(x'')$, where $x', x'' \in C$, then $\frac{x'_i}{\| x' \|_m} = \frac{x''_i}{\| x'' \|_m}$ for $i = \ovline{1,m}$,
thus $x'_i = \alpha x''_i$ for $\alpha = \| x' \|_m / \|  x'' \|_m > 0$. So $x' = \alpha x''$ $\Rarr$ $r = \| x' \| = |\alpha| \cdot \| x'' \| = r \alpha$ $\Rarr$ $\alpha = 1$.

2) $s$ is surjective. $\forall y=(y_1;...;y_m) \in S_m$: $s^{-1}(y) = \frac{r}{\| x \|} x$, where $x = \sum\limits_{i=1}^m y_i a_i$.

3) $s$ is continuous. Let $C \ni x^{(k)} \xrarr[k\rarr \infty]{} x$. Using closedness of $S(\theta, r)$ and Lemma \ref{lmFinDimManifClosed}, we obtain:
$x \in S(\theta, r) \cap L = C$ and $x^{(k)}_i \xrarr[k\rarr \infty]{} x_i$.
Therefore $\| x^{(k)} \|_m \xrarr[k\rarr \infty]{} \| x \|_m$, and $s(x^{(k)}) \xrarr[k\rarr \infty]{} s(x)$.

4) $s^{-1}$ is continuous too. For $S_m \ni y^{(k)} \xrarr[k\rarr \infty]{} y$: $S_m$ is closed $\Rarr$ $y \in S_m$, and $y^{(k)}_i \xrarr[k\rarr \infty]{} y_i$.
Let $x^{(k)} = \sum\limits_{i=1}^m y^{(k)}_i a_i$, $x = \sum\limits_{i=1}^m y_i a_i$, then $x^{(k)} \xrarr[k\rarr \infty]{} x$,
$\| x^{(k)} \| \xrarr[k\rarr \infty]{} \| x \|$, so $s^{-1}(y^{(k)}) \xrarr[k\rarr \infty]{} s^{-1}(y)$.

Consider $C_i = A_i \cap C = A_i \cap S(\theta, r) \cap L$, they are closed.
Hence the image $s(C_i) \subseteq S_m$, under homeomorphic mapping $s$, is closed too (\cite[XII, \S 3]{kuratowski1961}).
$\bigcup\limits_{i=1}^m C_i = \bigl( \bigcup\limits_{i=1}^m A_i \bigr) \cap C = S(\theta, r) \cap C = C$, so $\bigcup\limits_{i=1}^m s(C_i) = S_m$.
By LSB theorem, $\exists i_0$, $\exists y\in S_m$: $\{ y,-y \} \in s(C_{i_0})$.
Since $s^{-1}(-y) = - s^{-1}(y)$ (by (2)), we obtain $x = s^{-1}(y) \in C$: $\{ x, -x \} \in C_{i_0} \subseteq A_{i_0}$.
\end{proof}

{\ }

In the Main section, certain infinite-dimensional ball covering will be considered, where the following Hilbert space-related lemmas are needed.

{\ }

We denote by $H = l_2$ the separable infinite-dimensional Hilbert space over $\mathbb{R}$.
Until the end of this section, $\| \cdot \| = \| \cdot \|_H$ denotes the norm in $H$. $S = S(\theta, 1)$ is the unit sphere of $H$.

$\inprod{x}{y}$ is the scalar/inner product of $x, y\in H$,
$\angle(x, y) = \arccos \frac{\inprod{x}{y}}{\| x \| \cdot \| y \|} \in [0;\pi]$ is the angle between $x$ and $y$
($\angle(x,y) = 0$ if $x = \theta$ or $y = \theta$). $x \perp y$ means $\inprod{x}{y} = 0$.
The ``basic'' properties of $H$ and $\inprod{\cdot}{\cdot}$ (like $\inprod{x}{x} = \| x \|^2$) are assumed to be known; see e.g. \cite[II.3]{cotlar1974}, \cite[6]{lax2002}.

\begin{lemma}\label{lmCountDenseSphGeod}
$\exists D = \{ d_i \}_{i\in \mathbb{N}} \subset S$ such that $\forall \beta > 0$, $\forall x \in S$: $\exists d \in D$: $\angle(x,d) < \beta$.
\end{lemma}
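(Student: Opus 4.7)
The plan is to exploit the separability of $H = l_2$ together with the identity $\|x - d\|^2 = 2 - 2\inprod{x}{d}$ (valid for $x, d \in S$), which translates norm-closeness on $S$ into angular closeness. Specifically, for $x, d \in S$ one has $\cos(\angle(x,d)) = 1 - \|x-d\|^2/2$, whence $\|x - d\| = 2 \sin(\angle(x,d)/2)$ and $\angle(x, d) < \beta$ whenever $\|x - d\| < 2 \sin(\beta/2)$. Thus it suffices to exhibit a countable $D \subset S$ that is norm-dense in $S$.

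To build $D$, I would take the standard orthonormal basis $\{e_n\}_{n \in \mathbb{N}}$ of $l_2$ and form the countable set $D_0$ of all finite rational linear combinations $\sum_{i=1}^n q_i e_i$ with $n \in \mathbb{N}$, $q_i \in \mathbb{Q}$, not all $q_i$ zero. Setting $D = \{y/\|y\| \colon y \in D_0\} \subset S$ gives a countable candidate, and one can enumerate it to produce the promised sequence $\{d_i\}_{i \in \mathbb{N}}$.

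Given $x \in S$ and $\beta > 0$, the strategy is to fix some $\eta \in \bigl(0, \tfrac{1}{2} \sin(\beta/2)\bigr)$ and first approximate $x$ in norm by $y \in D_0$ with $\|y - x\| < 2\eta$. Writing $x = \sum_{i \geq 1} x_i e_i$, truncate at $n$ large enough that the tail satisfies $\|x - \sum_{i=1}^n x_i e_i\| < \eta$, then choose rationals $q_i$ with $|q_i - x_i|$ small enough that $y = \sum_{i=1}^n q_i e_i$ obeys $\|y - x\| < 2\eta$ by the triangle inequality. The key estimate is then $\bigl|\|y\| - 1\bigr| = \bigl|\|y\| - \|x\|\bigr| \lesq \|y - x\| < 2\eta < 1$, so $y \ne \theta$, and for $d := y/\|y\| \in D$,
\[ \|d - x\| \lesq \|d - y\| + \|y - x\| = \bigl|1 - \|y\|\bigr| + \|y - x\| < 4\eta < 2 \sin(\beta/2), \]
which by the first paragraph yields $\angle(x, d) < \beta$.

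I don't anticipate any serious obstacle: the only point requiring attention is the normalization step, where dividing by $\|y\|$ could in principle amplify a small error if $\|y\|$ were tiny, so the preliminary lower bound $\|y\| > 1 - 2\eta$ is essential for controlling $\|d - x\|$. Everything else is standard separability-plus-$\veps$ bookkeeping, and the whole argument in fact works in any separable normed space, the inner product being invoked only to identify $\angle(x,d)$ with an explicit function of $\|x-d\|$.
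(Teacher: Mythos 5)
Your proposal is correct and follows essentially the same route as the paper's proof: both normalize a countable dense subset of $H$ to obtain a countable norm-dense subset of $S$, and then convert norm-closeness on $S$ into angular closeness. The only differences are cosmetic --- you make the dense set explicit (rational combinations of the standard orthonormal basis) where the paper invokes abstract separability, and you replace the paper's continuity-of-$\arccos$ argument by the exact identity $\| x - d \| = 2\sin\bigl(\angle(x,d)/2\bigr)$, a clean quantitative shortcut (just add the harmless reduction to $\beta \leqslant \pi$ so that $\sin(\beta/2) > 0$).
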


In other words, there's a countable subset $D$ of $S$, which is everywhere dense (ED) in ``geodesic'' metric $\rho_S(x,y) = \angle(x,y)$ on $S$
(see \cite[6.4, 17.4]{deza2009}). Such $D$ is said to be {\it geodesically dense in $S$}.

\begin{proof}
$H$ is separable: $\exists R \subset H$, countable and ED in $H$. Let $D = \{ \frac{y}{\| y \|} \mid y\in R, y\ne \theta \}$; $D \subset S$ and $D$ is countable.
Take any $x \in S$. $\forall \delta > 0$ $\exists y \in R$: $\| x - y \| < \delta$. Then, by triangle inequality,

\centerline{$ 1 - \delta < \| x \| - \| x - y \| \lesq \| y \| \lesq \| y - x \| + \| x \| < 1 + \delta$}

\noindent
hence $\| x - \frac{y}{\| y \|} \| = \frac{1}{\| y \|} \cdot \bigl\| x \cdot \| y \| - y \bigr\| \lesq \frac{1}{\| y \|} \Bigl[\bigl\| (\| y \| - 1) x  \bigr\|  + \| x - y \| \Bigr] \lesq
\frac{1}{1 - \delta} \bigl[ \delta \| x \| + \delta \bigr] = \frac{2\delta}{1 - \delta}$.

Since $\frac{2\delta}{1 - \delta} \rarr 0$ as $\delta \rarr 0$, we obtain for $\forall \veps > 0$: $\exists d = \frac{y}{\| y \|} \in D$: $\| x - d \| < \veps$.
Consider $\veps = \frac{1}{n}$ to get $\{ d_n \}_{n\in \mathbb{N}} \subset D$: $d_n \xrarr[n\rarr \infty]{} x$.
(So, $D$ is ED on $S$ in $\| \cdot \|$-induced metric).

It follows from continuity of $\| \cdot \|$ and $\inprod{\cdot}{\cdot}$ that
$\frac{\inprod{d_n}{x}}{\| d_n \| \cdot \| x \|} \xrarr[n\rarr \infty]{} \frac{\inprod{x}{x}}{\| x \|^2} = 1$.
In turn, continuous $\arccos \frac{\inprod{d_n}{x}}{\| d_n \| \cdot \| x \|} \xrarr[n\rarr \infty]{} 0$,
therefore $\exists d_{n_{\beta}} \in D$: $\angle(x,d_{n_{\beta}}) < \beta$.
\end{proof}

{\bf Remark.} Given such $D$, it is easy to see that $\{ A_i \}_{i\in \mathbb{N}} = \{ \ovline{B}(d_i, \veps) \cap S \}_{i\in \mathbb{N}}$ for $\veps < 1$
is a covering of $S$ by closed subsets (moreover, $A_i \cong A_j$, see Lemma \ref{lmOmtdCongr}).
$\dim H = \aleph_0 = |\{  A_i\}|$, however, $\diam A_i \lesq \diam \ovline{B}(d_i, \veps) = 2\veps < 2$, thus no $A_i$ contains antipodal points of $S$, ---
the ``straight'' attempt of infinite-dimensional generalization of LSB theorem fails. Cf. \cite{cutler1973}.

\begin{lemma}\label{lmHilbertNonShiftInprod}
If $g\colon H \lrarr H$ is a non-shift motion, $g(\theta) = \theta$, then $\forall x, y \in H$: $\inprodbig{g(x)}{g(y)} = \inprod{x}{y}$.
\end{lemma}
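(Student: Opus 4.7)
The plan is to use the polarization identity, which in a real Hilbert space expresses the inner product in terms of norms alone. Specifically, $\inprod{x}{y} = \tfrac{1}{2}\bigl(\| x \|^2 + \| y \|^2 - \| x - y \|^2\bigr)$ for all $x, y \in H$. Since this formula involves only norms, and motions preserve distances, both sides transform the same way under $g$, provided $g$ fixes $\theta$ so that $\| g(x) \|$ can be rewritten as a norm of a difference.

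First I would note that $g$ being a motion gives $\| g(x) - g(y) \| = \| x - y \|$ for all $x, y$, and in particular, taking $y = \theta$ and using $g(\theta) = \theta$, we get $\| g(x) \| = \| x \|$ for every $x \in H$. Squaring, $\| g(x) \|^2 = \| x \|^2$, $\| g(y) \|^2 = \| y \|^2$, and $\| g(x) - g(y) \|^2 = \| x - y \|^2$.

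Next I would substitute these three equalities into the polarization identity applied to $g(x)$ and $g(y)$:
\begin{equation*}
\inprodbig{g(x)}{g(y)} = \tfrac{1}{2}\bigl( \| g(x) \|^2 + \| g(y) \|^2 - \| g(x) - g(y) \|^2 \bigr) = \tfrac{1}{2}\bigl( \| x \|^2 + \| y \|^2 - \| x - y \|^2 \bigr) = \inprod{x}{y}.
\end{equation*}

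There is no real obstacle here: the proof is essentially a one-line computation once the polarization identity is invoked, and it does not even require appealing to Mazur--Ulam (Theorem \ref{thmIsomZero2Zero}) to conclude that $g$ is linear --- linearity would be a consequence of inner-product preservation rather than a hypothesis. The only point worth stating explicitly, for completeness, is the derivation $\| g(x) \| = \| x \|$ from $g(\theta) = \theta$, which the excerpt has already recorded in Lemma \ref{lmMotionDecomp}.
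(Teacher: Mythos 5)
Your proof is correct, and it is cleaner than the one in the paper. Both arguments rest on the same underlying fact --- in a real Hilbert space the polarization identity $\inprod{x}{y} = \tfrac{1}{2}\bigl(\| x \|^2 + \| y \|^2 - \| x - y \|^2\bigr)$ recovers the inner product from norms alone --- but you invoke it directly, whereas the paper rederives it by expanding $\inprodbig{g(x)}{g(y)}$ through a chain of add-and-subtract manipulations, and along the way appeals to the Mazur--Ulam theorem (Theorem \ref{thmIsomZero2Zero}) to replace $g(x) - g(y)$ by $g(x-y)$ before taking its norm. As you observe, that appeal is dispensable: $\| g(x) - g(y) \| = \| x - y \|$ follows immediately from the isometry property without any linearity, so your route is strictly more elementary. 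The only ingredients you actually use --- $\| g(x) \| = \| x \|$ (from $g(\theta) = \theta$ and isometry) and $\| g(x) - g(y) \| = \| x - y \|$ --- are both justified in your write-up, so there is no gap.
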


\begin{proof}
$\inprodbig{g(x)}{g(y)} = \inprodbig{g(x) - g(y) + g(y)}{g(y)} = \inprodbig{g(x) - g(y)}{g(y)} + \| g(y) \|^2 =$

\centerline{$= \inprodbig{g(x) - g(y)}{g(y) - g(x) + g(x)} + \| y \|^2 \stackrel{\text{Th. \ref{thmIsomZero2Zero}}}{=}$}

\noindent
$= - \inprodbig{g(x - y)}{g(x - y)} + \| g(x) \|^2 - \inprodbig{g(x)}{g(y)} + \| y \|^2 =
\| x \|^2 + \| y \|^2 - \| x - y \|^2 - \inprodbig{g(x)}{g(y)}$,
therefore $\inprodbig{g(x)}{g(y)} = \frac{1}{2} \bigl[ \| x \|^2 + \| y \|^2 - \inprod{x - y}{x - y} \bigr] = \frac{1}{2} \bigl[ 2 \inprod{x}{y} \bigr] = \inprod{x}{y}$.
\end{proof}

{\ }

{\bf Definition.} Let $H \ni s \ne e \in H$, $\gamma \in [0;\pi]$. We call the set

\centerline{$C(s, e, \gamma) = \bigl\{ x \in H \colon \| x - s \| \lesq \| e - s \| \text{ and } \angle(x - s, e - s) \lesq \gamma \bigr\} \subseteq \ovline{B}(s, \| e - s \|)$}

\noindent
the (closed) {\it ommatidium}, with origin at $s$, around $[s, e]$, of angle $\gamma$ and of radius $\| e - s \|$.

It's actually a ``sector'' of the ball $\ovline{B}(s, \| e - s \|)$, and would be a usual disk sector in $\mathbb{R}^2$.

\begin{lemma}\label{lmOmtdSegm}
If $s \ne x \in C(s, e, \gamma)$, then $\forall \lambda \in [0; \frac{\| e - s\|}{\| x - s \|}]$: $s + \lambda (x - s) \in C(s, e, \gamma)$.
\end{lemma}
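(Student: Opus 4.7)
The plan is essentially a direct verification against the two defining conditions of the ommatidium $C(s,e,\gamma)$, so I expect no real obstacle; the only point deserving care is the handling of $\lambda = 0$ (where the angle is defined separately).

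Set $y = s + \lambda(x-s)$, so that $y - s = \lambda(x-s)$. First I would check the radius condition. We have $\| y - s \| = \lambda \| x - s \|$, and since $\lambda \leq \tfrac{\| e - s\|}{\| x - s \|}$ by hypothesis (and $\| x - s \| > 0$ because $x \neq s$), this immediately gives $\| y - s \| \leq \| e - s \|$.

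Next I would verify the angular condition $\angle(y - s, e - s) \leq \gamma$. Split into cases on $\lambda$. If $\lambda = 0$, then $y - s = \theta$, and by the convention stated in the preliminaries $\angle(\theta, e - s) = 0 \leq \gamma$. If $\lambda > 0$, then $y - s = \lambda (x - s)$ with $\lambda > 0$, so by bilinearity and positive homogeneity of $\| \cdot \|$,
\[
\frac{\inprod{y - s}{e - s}}{\| y - s \| \cdot \| e - s \|} = \frac{\lambda \inprod{x - s}{e - s}}{\lambda \| x - s \| \cdot \| e - s \|} = \frac{\inprod{x - s}{e - s}}{\| x - s \| \cdot \| e - s \|},
\]
and applying $\arccos$ gives $\angle(y - s, e - s) = \angle(x - s, e - s) \leq \gamma$, where the last inequality holds since $x \in C(s, e, \gamma)$.

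Combining the two conditions, $y = s + \lambda(x - s) \in C(s, e, \gamma)$, which is what was required.
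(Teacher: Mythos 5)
Your verification is correct and is exactly the routine check the paper has in mind when it says the lemma ``follows simply from the definition'': the radius condition follows from positive homogeneity of the norm, and the angle condition from invariance of $\angle(\cdot,e-s)$ under positive scaling, with the $\lambda=0$ case covered by the stated convention $\angle(\theta,\cdot)=0$. No gaps; the only cosmetic point is that ``bilinearity'' refers to the inner product rather than to $\|\cdot\|$ itself.
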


\begin{proof}
It follows simply from the definition.
\end{proof}

\begin{lemma}\label{lmOmtdCongr}
Two ommatidiums of the same angle and radius are congruent in $H$.
\end{lemma}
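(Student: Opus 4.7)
Write $r := \| e_1 - s_1 \| = \| e_2 - s_2 \|$ and $C_j := C(s_j, e_j, \gamma)$ for $j = 1, 2$. The strategy is to construct a single motion $f\colon H \lrarr H$ sending $s_1 \mapsto s_2$ and $e_1 \mapsto e_2$; since any motion preserves norms, and (by Lemma \ref{lmHilbertNonShiftInprod}) the non-shift component of $f$ also preserves $\inprod{\cdot}{\cdot}$ and hence angles, both defining inequalities of the ommatidium will then be preserved by $f$.

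First, I would translate. Set $h_1(x) := x - s_1$, $h_2(x) := x + s_2$; it then suffices to find a non-shift motion $g\colon H \lrarr H$ (so $g(\theta) = \theta$) satisfying $g(a) = b$, where $a := e_1 - s_1$ and $b := e_2 - s_2$, $\| a \| = \| b \| = r$. The composite $f := h_2 \circ g \circ h_1$ is then a motion by Lemma \ref{lmMotionDecomp} with $f(s_1) = s_2$ and $f(e_1) = e_2$.

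Second, I would construct $g$ as a planar rotation extended by the identity on its orthogonal complement. Let $u := a/r$ and $v := b/r$. If $u = v$, take $g := \mathcal{I}$. Otherwise, pick a unit vector $w \in H$ with $w \perp u$ and $v \in \text{span}(u, w)$: if $v \ne -u$, normalize $v - \inprod{u}{v}u$; if $v = -u$, choose any unit vector orthogonal to $u$, whose existence is granted by $\dim H = \aleph_0$. Setting $\alpha := \inprod{u}{v}$ and $\beta := \sqrt{1 - \alpha^2} \geq 0$, one has $v = \alpha u + \beta w$. On the plane $V := \text{span}(u, w)$ define $g(u) := \alpha u + \beta w$, $g(w) := -\beta u + \alpha w$, and on $V^{\perp}$ let $g$ be the identity; extend linearly to all of $H$. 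A direct calculation using $\alpha^2 + \beta^2 = 1$ verifies that $g$ preserves $\inprod{\cdot}{\cdot}$ and is bijective (the inverse is obtained by replacing $\beta$ with $-\beta$); combined with $\| x - y \|^2 = \| x \|^2 - 2\inprod{x}{y} + \| y \|^2$, this makes $g$ a non-shift motion, and by construction $g(a) = b$.

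Finally, for $x \in C_1$ one gets $\| f(x) - s_2 \| = \| g(x - s_1) \| = \| x - s_1 \| \lesq r$ and, by Lemma \ref{lmHilbertNonShiftInprod}, $\angle(f(x) - s_2, b) = \angle(g(x - s_1), g(a)) = \angle(x - s_1, a) \lesq \gamma$, so $f(x) \in C_2$; applying the same reasoning to $f^{-1}$ yields $f(C_1) = C_2$. The only mildly delicate point is the antipodal case $v = -u$, where $\text{span}(u, v)$ is one-dimensional and the auxiliary vector $w$ has to be imported from outside — an effortless step in infinite dimensions.
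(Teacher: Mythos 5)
Your proof is correct and follows essentially the same route as the paper: reduce by translation to origins at $\theta$, then build a non-shift motion as a rotation in the two-dimensional span of the two axis directions extended by the identity on the orthogonal complement, and invoke Lemma \ref{lmHilbertNonShiftInprod} to see that angles (hence ommatidium membership) are preserved. If anything, your explicit treatment of the antipodal case $v = -u$ is slightly more careful than the paper, whose subspace $M(e_1', e_2')$ degenerates to one dimension there.
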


\begin{proof}
Evidently, a parallel shift $h(x) = x + a$ transforms $C(s, e, \gamma)$ onto $C(s + a, e + a, \gamma)$.
Thus we consider, without loss of generality, $C_1 = C(\theta, e_1, \gamma)$ and $C_2 = C(\theta, e_2, \gamma)$,
where $\| e_1 \| = \| e_2 \| = r$, $e_1 \ne e_2$. We are going to find the non-shift motion $g$ such that $g(C_1) = C_2$.

It suffices to obtain $g$ such that $g(e_1) = e_2$. Indeed, $\forall x \in C_1$ we have then $\| g(x) \| = \|  x \| \lesq r$ and
$\angle(g(x), e_2) = \arccos \frac{\inprod{g(x)}{g(e_1)}}{\| g(x) \| \cdot \| g(e_1) \|} \stackrel{\text{Lemma \ref{lmHilbertNonShiftInprod}}}{=}
\arccos \frac{\inprod{x}{e_1}}{\| x \| \cdot \| e_1 \|} = \angle(x, e_1) \lesq \gamma$, so $g(x) \in C_2$.
Conversely, $\forall x \in C_2$: $g^{-1} (x) \in C_1$, because $g^{-1}$ is a non-shift motion as well, and $g^{-1} (e_2) = e_1$.

We apply the ``coordinate'' approach to define such $g$.

Let $e_1' = e_1 / r$, $e_2' = e_2 / r$. They generate the 2-dimensional subspace $M = M(e_1', e_2')$ of $H$.
$\exists u \in M$ such that $\| u \| = 1$ and $u \perp e_1'$, hence $M = M(e_1', u)$ and $\forall z\in M$: $z = z_1 e_1' + z_2 u$, $\| z \|^2 = z_1^2 + z_2^2$.
Then $e_2' = (\cos \alpha) e_1' + (\sin \alpha) u$ for some $\alpha \in (0;2\pi)$.

$H = M \oplus L$, where $L$ is the orthogonal complement of $M$. It follows that $\forall x \in H$ has unique representation $x = x_1 e_1' + x_2 u + w_x$, where $w_x \in L$,
and $\| x \|^2 = x_1^2 + x_2^2 + \| w_x \|^2$. In particular, $e_1 = r e_1'$ and $e_2 = (r \cos \alpha) e_1' + (r \sin \alpha) u$.

Let $g(x) = (x_1 \cos \alpha - x_2 \sin \alpha) e_1' + (x_1 \sin \alpha + x_2 \cos \alpha) u + w_x$. It has the required properties:

1) $g$ is isometric: $\| g(x) - g(y) \|^2 =$

\centerline{$= \bigl[ (x_1 - y_1) \cos \alpha - (x_2 - y_2) \sin \alpha \bigr]^2 + \bigl[ (x_1 - y_1) \sin \alpha + (x_2 - y_2) \cos \alpha \bigr]^2 + \| w_x - w_y \|^2 =$}

\centerline{$ = (x_1 - y_1)^2 + (x_2 - y_2)^2 + \| w_x - w_y \|^2 = \| x - y \|^2$}

2) $g$ is surjective: $g^{-1}(x) = (x_1 \cos \alpha + x_2 \sin \alpha) e_1' + (-x_1 \sin \alpha + x_2 \cos \alpha) u + w_x$.

3) $g(\theta) = \theta + \theta + \theta = \theta$, and 4) $g(e_1) = (r \cos \alpha) e_1' + (r \sin \alpha) u + \theta = e_2$.
\end{proof}

\begin{lemma}\label{lmOmtdCoverBall}
If $D = \{ d_i \}_{i\in \mathbb{N}} \subset S$ is geodesically dense in $S$, then $\forall \beta > 0$:
$\ovline{B}(\theta, 1) = \bigcup\limits_{i \in \mathbb{N}} C(\theta, d_i, \beta)$.
\end{lemma}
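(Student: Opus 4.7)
The plan is to prove the two inclusions of the set equality, the non-trivial one being $\overline{B}(\theta,1) \subseteq \bigcup_{i\in\mathbb{N}} C(\theta, d_i, \beta)$.

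The inclusion $\bigcup_{i\in\mathbb{N}} C(\theta, d_i, \beta) \subseteq \overline{B}(\theta,1)$ is immediate from the definition of the ommatidium: since $\|d_i - \theta\| = 1$, we have $C(\theta, d_i, \beta) \subseteq \overline{B}(\theta, \|d_i - \theta\|) = \overline{B}(\theta, 1)$.

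For the reverse inclusion, take an arbitrary $x \in \overline{B}(\theta, 1)$, so $\|x\| \leq 1$. If $x = \theta$, then by the convention $\angle(x, d_i) = 0 \leq \beta$ for any $d_i$, so $x \in C(\theta, d_1, \beta)$. If $x \neq \theta$, I would normalize by setting $x' = x / \|x\| \in S$. By geodesic density of $D$ in $S$ applied to $x'$, there exists $d_i \in D$ with $\angle(x', d_i) < \beta$. Since the angle $\angle(\cdot, \cdot)$ in $H$ is invariant under multiplication of either argument by a positive scalar (because $\inprod{\lambda x}{y} = \lambda \inprod{x}{y}$ and $\|\lambda x\| = \lambda \|x\|$ for $\lambda > 0$ cancel in the defining formula $\arccos \frac{\inprod{x}{y}}{\|x\|\cdot\|y\|}$), we get $\angle(x, d_i) = \angle(x', d_i) < \beta$. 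Combined with $\|x\| \leq 1 = \|d_i - \theta\|$, both defining conditions of $C(\theta, d_i, \beta)$ are satisfied, so $x \in C(\theta, d_i, \beta)$.

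There is essentially no hard step here: the argument is a direct unfolding of the definitions, once one observes that the geodesic density of $D$ is exactly tailored to the angular condition defining the ommatidiums, and that the radial (norm) condition is automatic because $\|x\| \leq 1 = \|d_i\|$. The only minor points to handle carefully are the degenerate case $x = \theta$ (settled by the angle convention) and the scale-invariance of the angle, which justifies passing from $x$ to $x/\|x\|$ to apply the density hypothesis.
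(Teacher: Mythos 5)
Your proof is correct and follows essentially the same route as the paper: both inclusions, the case $x=\theta$, normalization $x' = x/\|x\|$, and geodesic density to find a suitable $d_i$. The only cosmetic difference is that where you verify scale-invariance of the angle directly, the paper invokes its Lemma~\ref{lmOmtdSegm} (which encapsulates exactly that observation).
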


\begin{proof}
$C(\theta, d_i, \beta) \subseteq \ovline{B}(\theta, 1)$ is obvious.
$\theta \in C(\theta, d_i, \beta)$ for any $i$. Take $\forall x \in \ovline{B}(\theta, 1) \bsl \{ \theta \}$, then $x' = x / \| x \| \in S$ and $x = \| x \| \cdot x'$.
By definition of $D$, $\exists d \in D$: $\angle(x', d) < \beta$, thus $x' \in C(\theta, d, \beta)$.
By Lemma \ref{lmOmtdSegm}, $x \in C(\theta, d, \beta)$ too.
\end{proof}

\begin{lemma}\label{lmOmtdInsideBall}
Let $d \in S$ and $\gamma \lesq \arccos \frac{1}{4}$. Then $C_0 = C(-\frac{1}{2}d, \frac{1}{2}d, \gamma) \subset \ovline{B}(\theta, 1)$.
\end{lemma}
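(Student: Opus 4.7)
The plan is direct: translate the ommatidium so its origin is at $\theta$, then estimate $\| x \|$ via the inner product expansion available in $H$. Set $s = -\tfrac{1}{2}d$ and $e = \tfrac{1}{2}d$, so $\| e - s \| = \| d \| = 1$; the ommatidium therefore has radius $1$, and the defining condition $\angle (x - s, e - s) \lesq \gamma$ becomes $\angle (x + \tfrac{1}{2}d, d) \lesq \gamma$. Given $x \in C_0$, put $y = x - s = x + \tfrac{1}{2}d$; the definition of $C_0$ says exactly that $\| y \| \lesq 1$ and $\angle (y, d) \lesq \gamma$.

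Now expand
\[
\| x \|^2 = \| y - \tfrac{1}{2}d \|^2 = \| y \|^2 - \inprod{y}{d} + \tfrac{1}{4} \| d \|^2 = \| y \|^2 - \inprod{y}{d} + \tfrac{1}{4}.
\]
Since $\gamma \lesq \arccos \tfrac{1}{4} < \pi / 2$, we have $\cos \gamma \greq \tfrac{1}{4} > 0$, and because cosine is decreasing on $[0; \pi]$,
\[
\inprod{y}{d} = \| y \| \cdot \cos \angle (y, d) \greq \| y \| \cos \gamma
\]
(the case $y = \theta$ is covered by the convention $\angle (\theta, d) = 0$, making both sides zero). Hence, with $t = \| y \| \in [0; 1]$,
\[
\| x \|^2 \lesq t^2 - t \cos \gamma + \tfrac{1}{4}.
\]

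The right-hand side is a convex quadratic in $t$, so its maximum on $[0; 1]$ is reached at an endpoint: $\tfrac{1}{4}$ at $t = 0$, and $\tfrac{5}{4} - \cos \gamma$ at $t = 1$, the latter being larger. The hypothesis $\cos \gamma \greq \tfrac{1}{4}$ makes this at most $1$, so $\| x \| \lesq 1$ and $x \in \ovline{B}(\theta, 1)$, as claimed. There is no real obstacle here: the only substantive move is recognizing that the inner product structure of $H$ lets us turn the angle constraint into a linear lower bound on $\inprod{y}{d}$, and the numerical threshold $\arccos \tfrac{1}{4}$ is exactly what makes the one-variable bound tight at $t = 1$.
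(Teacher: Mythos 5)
Your proof is correct. It rests on the same key computation as the paper's --- expanding $\| x \|^2 = \| y - \tfrac{1}{2}d \|^2$ via the inner product and observing that the angle constraint forces $\inprod{y}{d} \greq \| y \| \cos \gamma \greq \tfrac{1}{4}\| y \|$ --- but you organize the reduction differently. The paper first invokes convexity of $\ovline{B}(\theta,1)$ together with Lemma \ref{lmOmtdSegm} to reduce to points $x$ with $\| x + \tfrac{1}{2}d \| = 1$, then introduces an orthogonal decomposition $H = M(d) \oplus T$ and parametrizes by an angle $\beta$ to get $\| x \|^2 = \tfrac{5}{4} - \cos\beta \lesq 1$. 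You instead handle all of $C_0$ at once by bounding $\| x \|^2$ by the convex quadratic $t^2 - t\cos\gamma + \tfrac{1}{4}$ in $t = \| y \| \in [0;1]$ and maximizing at the endpoints; the value $\tfrac{5}{4} - \cos\gamma$ at $t=1$ is exactly the paper's boundary case. Your route is slightly leaner --- it needs neither the convexity of the ball nor the coordinate decomposition --- while the paper's sphere-reduction makes the geometric picture (only the ``cap'' of the ommatidium matters) more explicit. Both correctly identify $\arccos\tfrac{1}{4}$ as the threshold making the worst case tight.
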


\begin{proof}
Due to convexity of $\ovline{B}(\theta, 1)$, we only need to prove that $\forall x \in S(-\frac{1}{2}d, 1) \cap C_0$: $\| x \| \lesq 1$,
because $\forall y \in C_0 \bsl \{ -\frac{1}{2}d \}$: $y = \lambda x + (1 - \lambda) (-\frac{1}{2} d)$
for $x = -\frac{1}{2}d + \frac{y + \frac{1}{2}d}{\|y + \frac{1}{2}d\|} \in S(-\frac{1}{2}d, 1) \cap C_0$ ($\in C_0$ follows from Lemma \ref{lmOmtdSegm})
and $\lambda = \| y + \frac{1}{2} d\| \in [0;1]$.

Let $H = M(d) \oplus T$, then $\forall y \in H$:
$y = y_1 d + y_2 u$, where $d \perp u \in T$, $\| u \| = 1$, and $\| y \|^2 = y_1^2 + y_2^2$.
In particular, for $y = x + \frac{1}{2}d$: $\| y \| = 1$, hence we can represent $y_1 = \cos \beta \greq 0$, $y_2 = \sin \beta$ for $\beta \in [0;2\pi)$.
At that $y_1 = \inprod{y}{d} = \frac{\inprod{y}{d}}{\| y \| \cdot \| d \|} = \cos \angle(x + \frac{1}{2}d,d)$.
$x \in C_0$, so $y_1 = \cos \beta \greq \cos \gamma \greq \frac{1}{4}$,

\hfill $\| x \|^2 = \| y - \frac{1}{2} d \|^2 = \| (\cos \beta - \frac{1}{2}) d + \sin \beta \cdot u \|^2 = (\cos \beta - \frac{1}{2})^2 + \sin^2 \beta =
\frac{5}{4} - \cos \beta \lesq 1$
\end{proof}

\begin{lemma}\label{lmOmtdOriginNonIntr}
If $\gamma < \pi$, then $s \notin \intr C(s, e, \gamma)$.
\end{lemma}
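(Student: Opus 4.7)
The plan is to exhibit, for every $\varepsilon > 0$, a point in $B(s,\varepsilon)$ that fails the angular condition defining $C(s,e,\gamma)$, thereby showing $B(s,\varepsilon) \not\subseteq C(s,e,\gamma)$.

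The natural candidate is to move from $s$ in the direction opposite to $e - s$. Concretely, for each $\varepsilon > 0$ I would set $t = \varepsilon / (2\|e-s\|) > 0$ and consider the point $x_{\varepsilon} = s - t(e-s)$. Then $\|x_{\varepsilon} - s\| = t\|e-s\| = \varepsilon/2 < \varepsilon$, so $x_{\varepsilon} \in B(s,\varepsilon)$. On the other hand $x_{\varepsilon} - s = -t(e-s)$ is a negative scalar multiple of $e - s$, whence
\[
\cos \angle(x_{\varepsilon} - s,\ e - s) = \frac{\inprod{-t(e-s)}{e-s}}{\|{-t(e-s)}\| \cdot \|e-s\|} = -1,
\]
so $\angle(x_{\varepsilon} - s, e - s) = \pi > \gamma$ by assumption. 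Therefore $x_{\varepsilon} \notin C(s,e,\gamma)$.

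This establishes that no open ball around $s$ is contained in $C(s,e,\gamma)$, so $s$ is not in the interior. There is no real obstacle — the only thing to check is that the angular definition does not vacuously admit antipodal directions, which is precisely what $\gamma < \pi$ rules out. The argument is written for $H$ (where the inner product is available), matching the setting in which the ommatidium was defined.
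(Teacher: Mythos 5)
Your argument is correct and coincides with the paper's own proof: both take the point $s - t(e-s)$ for small $t>0$, observe it lies in any prescribed ball $B(s,\veps)$, and note that $\angle(-t(e-s),\,e-s)=\pi>\gamma$ excludes it from $C(s,e,\gamma)$. Only the bookkeeping of the radius differs (the paper uses $B(s,2\veps\|e-s\|)$ where you normalize $t$ to land inside $B(s,\veps)$), so there is nothing to add.
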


\begin{proof}
Let $v = e - s$, then $\forall \veps > 0$: $\angle (-\veps v, e - s) = \arccos \frac{\inprod{-\veps v}{v}}{\| -\veps v \| \cdot \| v \|} = \arccos (-1) = \pi > \gamma$,
hence $B(s, 2 \veps \| e - s\|) \ni s -\veps v \notin C(s, e, \gamma)$, and $B(s, 2 \veps \| e - s\|) \nsubseteq C(s, e, \gamma)$.
\end{proof}

\begin{lemma}\label{lmOmtdMiddleInt}
If $\gamma > 0$, then $\frac{1}{2}(s + e) \in \intr C(s, e, \gamma)$.
\end{lemma}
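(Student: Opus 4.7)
The plan is to exhibit an explicit radius $\veps > 0$ for which $B\bigl(\tfrac{1}{2}(s + e), \veps\bigr) \subseteq C(s, e, \gamma)$. Writing $v = e - s \ne \theta$ and $m = \tfrac{1}{2}(s + e)$, I would parametrize any candidate point $x$ via $\delta = x - m$, so that $x - s = \tfrac{1}{2} v + \delta$ with $\| \delta \| < \veps$, and then verify the two defining conditions of $C(s, e, \gamma)$ separately.

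For the norm condition, the triangle inequality gives $\| x - s \| \lesq \tfrac{1}{2} \| v \| + \veps$, which is $\lesq \| v \| = \| e - s \|$ as soon as $\veps \lesq \tfrac{1}{2} \| v \|$. The reverse estimate $\| x - s \| \greq \tfrac{1}{2} \| v \| - \veps > 0$ simultaneously ensures $x - s \ne \theta$, so the angle $\angle(x - s, v)$ is computed by the usual inner-product formula rather than the degenerate convention.

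For the angular condition, I would write
\[
\cos \angle(x - s, v) = \frac{\inprod{\tfrac{1}{2} v + \delta}{v}}{\| x - s \| \cdot \| v \|} = \frac{\tfrac{1}{2} \| v \|^2 + \inprod{\delta}{v}}{\| x - s \| \cdot \| v \|},
\]
and observe that as $\delta \to \theta$ we have $|\inprod{\delta}{v}| \lesq \| \delta \| \cdot \| v \| \to 0$ while $\| x - s \| \to \tfrac{1}{2} \| v \|$, so the right-hand side tends to $1$ and $\angle(x - s, v) \to 0$ by continuity of $\arccos$. Since $\gamma > 0$, shrinking $\veps$ further yields $\angle(x - s, v) < \gamma$ whenever $\| \delta \| < \veps$.

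No real obstacle arises: the assertion is essentially that the strictly interior values $\| m - s \| = \tfrac{1}{2} \| v \| < \| v \|$ and $\angle(m - s, v) = 0 < \gamma$ at the midpoint survive small perturbations by continuity. The only mild subtlety is keeping $x - s$ away from $\theta$ so as to legitimize the angle computation, and this is automatically absorbed into the same smallness requirement on $\veps$.
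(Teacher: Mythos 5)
Your proposal is correct and follows essentially the same route as the paper's proof: verify the norm condition by the triangle inequality with $\veps \lesq \tfrac{1}{2}\|v\|$, and verify the angle condition by showing $\cos\angle(x-s,\,v) \to 1$ uniformly as $\veps \to 0$ via the Cauchy--Schwarz bound $|\inprod{\delta}{v}| \lesq \|\delta\|\cdot\|v\|$. The only difference is cosmetic: the paper first normalizes to $s = \theta$, $\|e\| = 1$, $\gamma \lesq \tfrac{\pi}{4}$ using the congruence lemma, whereas you carry out the identical perturbation estimate in general position.
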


\begin{proof}
Without loss of generality, assume that $s = \theta$, $\| e \| = 1$, and $\gamma \lesq \frac{\pi}{4}$
(otherwise move the ommatidium so that its origin becomes $\theta$ by Lemma \ref{lmOmtdCongr}, scale it to attain $\| e \| = 1$ ($x\lrarr x / \| e \|$),
and consider $C(\theta, e, \frac{\pi}{4}) \subseteq C(\theta, e, \gamma)$).
We need to show that $\exists \veps > 0$: $B(\frac{1}{2}e, \veps) \subseteq C(\theta, e, \gamma)$ $\LRarr$ $\forall x\in B(\frac{1}{2}e, \veps)$:
$\| x\| \lesq 1$ and $\angle (x, e) \lesq \gamma$; the latter inequality is equivalent to $\cos \angle(x, e) \greq \cos \gamma$.

For arbitrary $\veps > 0$ and $\forall x \in B(\frac{1}{2}e, \veps)$: $x = \frac{1}{2}e + b$, where $\| b \| < \veps$.

Then $\| x\| \lesq \| \frac{1}{2} e\| + \| b\| < \frac{1}{2} + \veps$; the constraint $\veps < \frac{1}{2}$ ensures $\| x\| < 1$.

\centerline{$\cos \angle(x, e) = \frac{\inprod{x}{e}}{\| x \| \cdot \| e\|} = \frac{1}{\| \frac{1}{2}e + b \|} \bigl[ \inprod{\frac{1}{2}e}{e} + \inprod{b}{e} \bigr] =
\frac{1}{\| e + 2b\|} + \frac{2}{\| e + 2b\|} \inprod{b}{e}$}

1) $\| e + 2b \| \lesq \| e \| + 2 \| b \| < 1 + 2\veps$, hence $\frac{1}{\| e + 2b\|} > \frac{1}{1 + 2 \veps}$.

2) On the other hand, $\| e + 2b\| \greq \| e \| - \| - 2b\| > 1 - 2 \veps$ $\Rarr$ $\frac{2}{\| e + 2b\|} < \frac{2}{1 - 2\veps}$,
and Cauchy-Bunyakowsky-Schwartz inequality implies $\bigl| \inprod{b}{e} \bigr| \lesq \| b \| \cdot \| e \| < \veps$,
therefore $\frac{2}{\| e + 2b\|} \inprod{b}{e} > - \frac{2\veps}{1 - 2\veps}$.

Consequently (for sufficiently small $\veps$) $\cos \angle(x, e) > \frac{1}{1 + 2 \veps} - \frac{2\veps}{1 - 2 \veps} \rarr 1$ as $\veps \rarr 0$,
thus for some $\veps_0 \in (0;\frac{1}{2})$ we obtain: $\cos \angle(x, e) \greq \cos \gamma$ for each $x\in B(\frac{1}{2}e, \veps_0)$.
\end{proof}

\begin{lemma}\label{lmOmtdConvex}
If $\gamma \lesq \frac{\pi}{2}$, then $C(s, e, \gamma)$ is convex.
\end{lemma}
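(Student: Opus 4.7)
\medskip

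\noindent\textbf{Proof plan for Lemma \ref{lmOmtdConvex}.}

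The plan is to unpack the two defining inequalities of $C(s,e,\gamma)$ and check each one is preserved under convex combinations. First I would use a shift by $-s$ (which by Lemma \ref{lmMotionBall} preserves balls and clearly preserves angles $\angle(\cdot - s, e - s)$) to reduce to the case $s = \theta$, and write $r = \|e\|$. So the task becomes: for $x, y \in C(\theta, e, \gamma)$ and $\lambda \in [0,1]$, check that $z := \lambda x + (1-\lambda) y$ satisfies $\|z\| \leq r$ and $\angle(z, e) \leq \gamma$.

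The norm bound is immediate from the triangle inequality: $\|z\| \leq \lambda \|x\| + (1-\lambda)\|y\| \leq r$. The angle bound is where $\gamma \leq \pi/2$ enters. If $z = \theta$, then $\angle(z, e) = 0 \leq \gamma$ by the convention adopted before Lemma \ref{lmCountDenseSphGeod}, so assume $z \neq \theta$. The bound $\angle(z, e) \leq \gamma$ is equivalent to $\inprod{z}{e} \geq \|z\|\cdot\|e\|\cos\gamma$. By bilinearity,
\begin{equation*}
\inprod{z}{e} = \lambda \inprod{x}{e} + (1-\lambda)\inprod{y}{e}.
\end{equation*}
For each of $x$ and $y$ separately, $\angle(x,e) \leq \gamma$ gives $\inprod{x}{e} \geq \|x\|\cdot\|e\|\cos\gamma$ when $x \neq \theta$; when $x = \theta$ both sides are $0$ (using $\cos\gamma \geq 0$), so the same inequality holds. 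Summing,
\begin{equation*}
\inprod{z}{e} \geq \bigl(\lambda\|x\| + (1-\lambda)\|y\|\bigr)\|e\|\cos\gamma.
\end{equation*}

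Now I use $\cos\gamma \geq 0$ a second time, combined with the triangle inequality $\lambda\|x\| + (1-\lambda)\|y\| \geq \|z\|$, to conclude
\begin{equation*}
\inprod{z}{e} \geq \|z\|\cdot\|e\|\cos\gamma,
\end{equation*}
i.e.\ $\cos\angle(z,e) \geq \cos\gamma$ and therefore $\angle(z,e) \leq \gamma$, as required. The only delicate point, and the place where the hypothesis $\gamma \leq \pi/2$ is essential, is the sign of $\cos\gamma$: without $\cos\gamma \geq 0$ one cannot simultaneously accommodate $x$ or $y$ possibly being $\theta$ and then multiply the triangle-inequality bound through by $\|e\|\cos\gamma$ without reversing the direction of the inequality. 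Everything else is routine manipulation of the inner product.
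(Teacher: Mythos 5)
Your proposal is correct and follows essentially the same route as the paper: the triangle inequality gives the norm bound, and bilinearity of the inner product together with the sign condition $\cos\gamma \geq 0$ gives the angle bound. The only cosmetic difference is that you keep the inequality in the unnormalized form $\inprod{z}{e} \geq \| z \| \cdot \| e \| \cos\gamma$, which lets you absorb the degenerate cases $x = \theta$ or $y = \theta$ uniformly, whereas the paper dispatches them separately via Lemma \ref{lmOmtdSegm} before dividing by $\| z \|$.
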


\begin{proof}
Again, we assume $s = \theta$ and $\| e \| = 1$ without loss of generality.

Let $x, y \in C(\theta, e, \gamma)$. We claim that $\forall \lambda \in [0;1]$: $z = \lambda x + (1 - \lambda) y \in C(\theta, e, \gamma)$.

If $x = \theta$ or $y = \theta$, then $z \in C(\theta, e, \gamma)$ by Lemma \ref{lmOmtdSegm}.
If not: clearly $\theta \in C(\theta, e, \gamma)$, suppose $z \ne \theta$.

1) $\| z\| \lesq \lambda \| x \| + (1 - \lambda) \| y \| \lesq \lambda \cdot 1 + (1 - \lambda) \cdot 1 = 1$;

2) $\cos \angle(z, e) = \frac{\inprod{z}{e}}{\| z \| \cdot \| e\|} = \bigl[ \lambda \frac{\inprod{x}{e}}{\| z \|} + (1 - \lambda) \frac{\inprod{y}{e}}{\| z \|} \bigr] =
\bigl[ \lambda \frac{\inprod{x}{e}}{\| x\|} \cdot \frac{\| x \|}{\| z \|} + (1 - \lambda) \frac{\inprod{y}{e}}{\| y\|} \cdot \frac{\| y \|}{\| z \|} \bigr] =$

\hfill $= \bigl[ \frac{\lambda \| x \|}{\| z\|} \cos \angle(x, e) + \frac{(1 - \lambda) \| y\|}{\| z \|} \cos \angle(y, e) \bigr] \greq
\frac{\lambda \| x\| + (1 - \lambda) \| y\|}{\| \lambda x + (1 - \lambda) y \|} \cos \gamma \stackrel{\cos \gamma \greq 0}{\greq} \cos \gamma$
$\LRarr$ $\angle(z, e) \lesq \gamma$
\end{proof}

\section{Main}

\begin{prop}\label{propMain}
Let $\dim X \greq m \in \mathbb{N}$, $B(\theta, 1) \subseteq E \subseteq \ovline{B}(\theta, 1)$, and $E = \bigcup\limits_{i=1}^m A_i$, where $A_i \cong A_j$.

Then either $\theta \in \bigcap\limits_{i=1}^m \intr A_i$, or $\theta \notin \bigcup\limits_{i=1}^m \intr A_i$.
\end{prop}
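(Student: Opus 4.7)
The plan is to find, via Lemma \ref{lmLSBNormSpace}, one index $i^*$ such that $\overline{A_{i^*}}$ contains a pair of antipodal points on $S(\theta,1)$, and then to argue, via Lemma \ref{lmDiamTrivShift} and NCS, that every motion sending $A_{i^*}$ to $A_j$ must actually fix $\theta$. This forces $\theta\in\intr A_{i^*}$ iff $\theta\in\intr A_j$ for every $j$, yielding the dichotomy.

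The preparatory observation is that, although the $A_i$ themselves need not be closed, the closures $\overline{A_i}$ collectively cover the whole closed unit ball. Indeed, $A_i\subseteq E\subseteq\overline{B}(\theta,1)$ gives $\overline{A_i}\subseteq\overline{B}(\theta,1)$, while $B(\theta,1)\subseteq\bigcup_{i=1}^{m} A_i$ together with the finiteness of the union yields
\[
\overline{B}(\theta,1)=\overline{B(\theta,1)}\subseteq\overline{\bigcup_{i=1}^{m}A_i}=\bigcup_{i=1}^{m}\overline{A_i}.
\]
Restricting to the sphere, $S(\theta,1)=\bigcup_{i=1}^{m}\bigl(\overline{A_i}\cap S(\theta,1)\bigr)$ is a cover of the unit sphere by $m$ closed subsets, so Lemma \ref{lmLSBNormSpace} (applicable because $\dim X\greq m$) supplies some $i^*$ and some $x\in S(\theta,1)$ with $\{x,-x\}\subseteq\overline{A_{i^*}}$.

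For every $j$ I would choose a motion $g_j$ with $g_j(A_{i^*})=A_j$. Since $g_j$ is a homeomorphism of $X$, $g_j(\overline{A_{i^*}})=\overline{A_j}\subseteq\overline{B}(\theta,1)$, so both $\|g_j(x)\|\lesq 1=\|x\|$ and $\|g_j(-x)\|\lesq 1=\|x\|$. Lemma \ref{lmDiamTrivShift} --- and this is exactly where NCS is used --- then forces the shift component of $g_j$ to be trivial, i.e.\ $g_j(\theta)=\theta$. Hence each $g_j$ is a homeomorphism fixing $\theta$ and carrying $\intr A_{i^*}$ bijectively onto $\intr A_j$, so $\theta\in\intr A_{i^*}$ iff $\theta\in\intr A_j$. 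Quantifying over $j$ yields either $\theta\in\bigcap_{j=1}^{m}\intr A_j$ (when $\theta\in\intr A_{i^*}$) or $\theta\notin\bigcup_{j=1}^{m}\intr A_j$ (otherwise), as required.

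The step most likely to feel like the main obstacle is the first: recognizing that passing from $A_i$ to $\overline{A_i}$ exposes the entire sphere $S(\theta,1)$, so that the LSB-antipodal pair has norm exactly $1$ and Lemma \ref{lmDiamTrivShift} applies immediately. A naive attempt working only on spheres $S(\theta,\rho)$ with $\rho<1$ cannot use $\|g_j(\pm x)\|\lesq 1$ as the required $\|g_j(\pm x)\|\lesq \|x\|=\rho$, and without compactness the limit $\rho\to 1$ is not easy to take in a general normed space.
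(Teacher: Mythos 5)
Your proposal is correct and follows essentially the same route as the paper: cover $S(\theta,1)$ by the closed sets $\ovline{A_i}\cap S(\theta,1)$, apply Lemma \ref{lmLSBNormSpace} to get an antipodal pair in one $\ovline{A_{i^*}}$, use Lemma \ref{lmDiamTrivShift} to conclude every motion $A_{i^*}\rarr A_j$ fixes $\theta$, and transport a neighbourhood of $\theta$. Your only (harmless) streamlining is to note directly that a motion carries $\ovline{A_{i^*}}$ onto $\ovline{A_j}\subseteq\ovline{B}(\theta,1)$, where the paper instead builds the auxiliary sets $A_k'$, $A_i'$ to place the image of the antipodal pair inside the closed ball.
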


\begin{proof}
Suppose $m \greq 2$. Let $K = \ovline{B}(\theta, 1)$, $S = S(\theta, 1)$.
$K = \ovline{B(\theta, 1)} \subseteq \ovline{E} \subseteq \ovline{\ovline{B}(\theta, 1)} = K$ $\LRarr$ $\ovline{E} = K$.

Let $f_{ij}$ be the motion transforming $A_i$ to $A_j$, so that $f_{ij}(A_i) = A_j$, and $f_{ji} = f^{-1}_{ij}$ ($f_{ii} = \mathcal{I}$).

Consider $S_i = \ovline{A_i} \cap S$. They are closed and $\bigcup\limits_{i=1}^m S_i = \bigl( \bigcup\limits_{i=1}^m \ovline{A_i} \bigr) \cap S =
\ovline{\bigcup\limits_{i=1}^m A_i} \cap S = K \cap S = S$. By Lemma \ref{lmLSBNormSpace}, $\exists S_k$, $\exists d \in S$: $\{ d, -d \} \subseteq S_k$.

Take any $i \ne k$. Let $A_k' = A_k \cup S_k \cup f^{-1}_{ki}(S_i)$ and $A_i' = A_i \cup S_i \cup f_{ki}(S_k)$.

1) $A_i' \subseteq K$. Indeed, a) $A_i \subseteq E \subseteq K$, b) $S_i \subseteq S \subset K$, c) $\forall x \in S_k \subseteq \ovline{A_k}$
$\exists \{ x_l \}_{l=1}^{\infty}$, $x_l \in A_k$: $x_l \xrarr[l\rarr \infty]{} x$, then continuous $f_{ki}(x_l) \xrarr[l\rarr \infty]{} f_{ki}(x)$.
$f_{ki}(x_l) \in A_i$, hence $f_{ki}(x) \in \ovline{A_i} \subseteq \ovline{E} = K$.

2) $f_{ki}(A_k') = f_{ki} (A_k) \cup f_{ki}(S_k) \cup f_{ki} \bigl( f^{-1}_{ki} (S_i) \bigr) = A_i \cup S_i \cup f_{ki}(S_k) = A_i'$.

Since $\{ d, -d \} \subseteq S_k \subseteq A_k'$, we obtain $f_{ki} \bigl( \{d, -d \} \bigr) \subseteq A_i' \subseteq K$,
so $\| f_{ki} (d) \| \lesq 1 = \| d \|$ and $\| f_{ki}(-d) \| \lesq \| d \|$.
By Lemma \ref{lmDiamTrivShift}, the shift component $h_{ki}$ of $f_{ki} = h_{ki} \circ g_{ki}$ is trivial. Then the shift component $h_{ik}$ of $f_{ik} = f^{-1}_{ki}$ is also trivial.

There are 2 possible cases: either $\exists i$: $\theta \in \intr A_i$, or $\forall i$: $\theta \notin \intr A_i$ $\LRarr$ $\theta \notin \bigcup\limits_{i=1}^m \intr A_i$.

Consider the former case, then $\exists B(\theta, \veps) \subseteq A_i$. Take $\forall j \ne i$.
By Lemma \ref{lmMotionBall}, $f_{ik} \bigl( B(\theta, \veps) \bigr) = B\bigl( f_{ik}(\theta), \veps \bigr) = B\bigl( g_{ik}(\theta), \veps \bigr) = B(\theta, \veps)$,
hence $B(\theta, \veps) \subseteq A_k$. Apply Lemma \ref{lmMotionBall} again:
$f_{kj} \bigl( B(\theta, \veps) \bigr) = B\bigl( f_{kj}(\theta), \veps \bigr) = B(\theta, \veps) \subseteq A_j$, and $\theta \in \intr A_j$.
Therefore $\theta \in \bigcap\limits_{i=1}^m \intr A_i$.
\end{proof}

{\bf Corollary.} If $\dim X \greq \aleph_0$, then the statement of Prop. \ref{propMain} holds true for $\forall m \in \mathbb{N}$:
a ball in such $X$ cannot be covered by any finite number of congruent subsets so that its centre belongs to the interiors of certain of them
and doesn't belong to the interiors of the others.

As for infinite coverings, see Ex. \ref{exmpUnivInfCover} and Ex. \ref{exmpHilbertCountCover} below.

{\ }

\begin{remark}\label{remIneqCounterEx}
One may ask why we do not generalize the approach from \cite{douwen1993} instead.

The reasonings there essentially make use of the inequality

\centerline{$\forall x, y, z \in X$: $\| x - y \|^2 + \| z \|^2 \lesq \| x \|^2 + \| y \|^2 + \| x - z \|^2 + \| y - z \|^2$}

\noindent
which is the implication of the inequality \cite[p. 184, (c)]{douwen1993} (for $p \larr \theta$, $q \larr z = \sigma_A(\theta)$),
established for Euclidean/pre-Hilbert $X$. Unfortunately, it is not true for arbitrary NCS $X$: consider $X = \mathbb{R}^2_{3/2}$ with
$\| x \| = \bigl\| (x_1;x_2) \bigr\|_{3/2} = \bigl( |x_1|^{3/2} + |x_2|^{3/2} \bigr)^{2/3}$
and let $x = (1;0)$, $y = (0;1)$, $z = (1;1)$. Then

\centerline{$\| x - y \|^2 + \| z \|^2 = 2 \cdot 2^{\frac{4}{3}} = 4 \cdot 2^{\frac{1}{3}} >
4 = 1 + 1 + 1 + 1 = \| x \|^2 + \| y \|^2 + \| x - z \|^2 + \| y - z \|^2$}

(Maybe some subtler form of the inequality would work.)
\end{remark}

{\ }

\begin{remark}
On the other hand, LSB theorem is applied here too, being a ``foundation stone'' of the inference;
another pebble is that the motions transforming the subsets onto each other don't include parallel shift, otherwise one of antipodal points moves outside of the ball.

Antipodal/``diametral'' points and the constraints they impose are exploited, --- without resort to LSB theorem, --- in \cite[\S 4]{edelstein1988},
where NCS Banach spaces are considered; see Rem. \ref{remPlaneNonNCSBanachNCS} below.
\end{remark}

{\ }

\begin{remark}
If we replace the condition ``$A_i \cong A_j$'' by ``$\intr A_i \cong \intr A_j$'', then ``$\theta \in \intr A_1$ and $\theta \notin \intr A_2$'' becomes possible, evidently;
for example, in $\mathbb{R}^2$ take $z\in K$: $\| z \| = \frac{1}{2}$, and

\centerline{$A_1 = B(\theta, \frac{1}{8}) \cup \{ (x;y) \in K\mid x\in \mathbb{Q} \text{ and } y\in \mathbb{Q} \}$,
$A_2 = B(z, \frac{1}{8}) \cup \{ (x;y) \in K\mid x\notin \mathbb{Q} \text{ or } y\notin \mathbb{Q} \}$}

\noindent
then $A_1 \cup A_2 = K$, $\intr A_1 = B(\theta, \frac{1}{8}) \ni \theta$, $\intr A_2 = B(z, \frac{1}{8})$, so $\intr A_1 \cong \intr A_2$, $A_1 \cap A_2 = \nullset$.

Same happens if we replace ``congruence'' by ``homotheticity'': take $A_1 = K$ and let $A_2$, $A_3$, ... be the balls of sufficiently small radius $\rho$ so that
all of them can be placed within $K$ and don't contain its centre (in other words, $A_i = \rho K + c_i$, $\rho < \| c_i \| < 1 - \rho$ for $i \greq 2$).
\end{remark}

{\ }

\begin{example}\label{exmpNonNCS}
Without NCS, the Prop. \ref{propMain} statement can become false.
Consider non-NCS $l_{\infty} = \bigl\{ x = (x_1;x_2;...)\colon \| x \|_{\infty} = \sup\limits_{i\in \mathbb{N}} |x_i| < \infty \bigr\}$
and its unit ball $\ovline{B}(\theta,1) = \bigl\{ x\in l_{\infty} \colon \sup\limits_i |x_i| \lesq 1 \bigr\}$. For any odd $n \greq 3$
the subsets $A_i = \bigl\{ x\in \ovline{B}(\theta, 1) \colon x_1 \in [-1+2\frac{i-1}{n};-1 + 2 \frac{i}{n}]\bigr\}$, $i=\ovline{1,n}$, are congruent
(motion $f_{ij}(x) = (x_1 + 2\frac{j-i}{n};x_2;x_3;...)$ transforms $A_i$ to $A_j$), $\theta \in B(\theta, \frac{1}{2n}) \subset \intr A_{1 + \lfloor\frac{n}{2}\rfloor}$,
while $\theta \notin A_i$ for $i \ne 1 + \lfloor\frac{n}{2}\rfloor$, and $A_1 \cup ... \cup A_n = \ovline{B}(\theta, 1)$.
Instead of $l_{\infty}$, we can take $\mathbb{R}^m_{\infty}$ if $m \greq n$.
Note that this decomposition of $\ovline{B}(\theta, 1)$ is a dissection and a covering (but not a partition).
\end{example}

{\ }

\begin{remark}\label{remPlaneNonNCSBanachNCS}
Consider $\mathbb{R}^2_p$, $\| x \|_{2;p} = \bigl( |x_1|^p + |x_2|^p \bigr)^{\frac{1}{p}}$.
For $p = 2$, usual Euclidean metric, the original dissection problem posed in \cite[C6]{croft1991} remains unsolved.
For $p = 1$ or $p = \infty$, --- non-NCS case, --- $\ovline{B}(\theta, 1)$ is a square
(sides being parallel to $Ox_i$ for $p = \infty$, rotated by $\frac{\pi}{4}$ for $p = 1$),
trivially dissectable into 3 (5, 7, ...) congruent rectangles such that the centre $\theta$ is within one of them.

It is shown in \cite[\S 2]{edelstein1988} that $\ovline{B}(\theta, 1)$ in non-NCS $c_0$, $C_{[0;1]}$ is partitionable into $n$ congruent subsets for $\forall n \lesq \aleph_0$,
while in NCS Banach $X$ there's no such partition if $2 \lesq n < \min\{ \dim X , \aleph_0 \} + 1$.
\end{remark}

{\ }

\begin{example}\label{exmpCoverDisk}
Obviously, as Fig. \ref{figCoverDiskGen} illustrates, the ball/disk in $\mathbb{R}^2$ can be covered by $n \greq 4$ congruent and convex subsets such that its centre belongs to the interior of exactly one set; moreover, the centre is at positive distance from other sets.

\begin{figure}[h]
\centerline{\begin{tabular}{ccc}
\includegraphics[width=3cm]{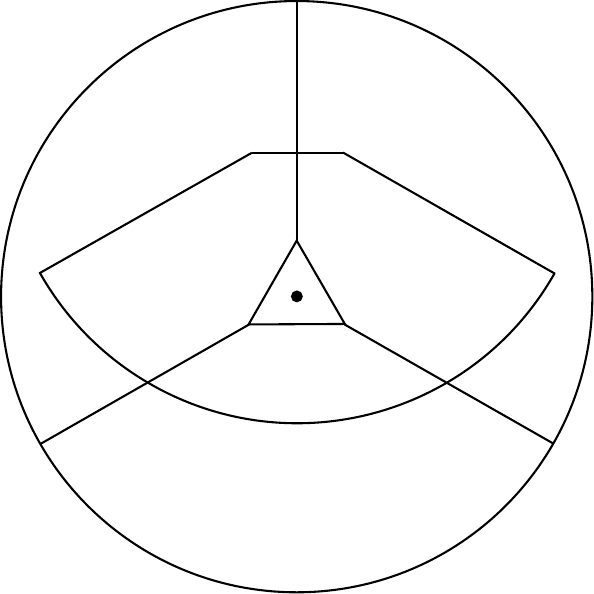}
&
\includegraphics[width=3cm]{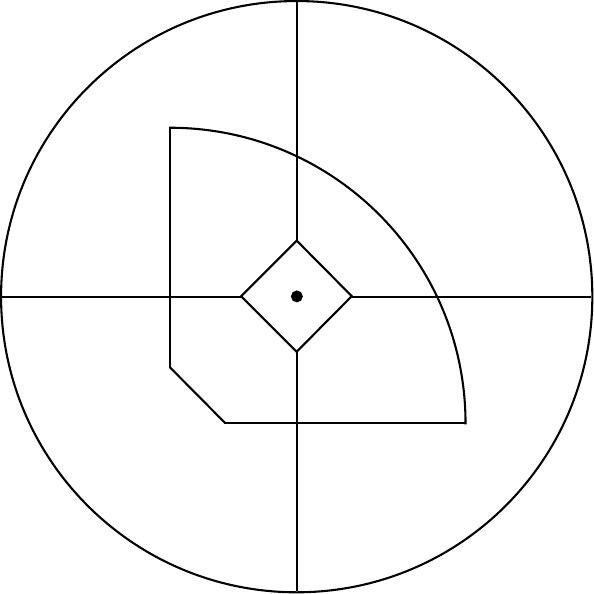}
&
\includegraphics[width=3cm]{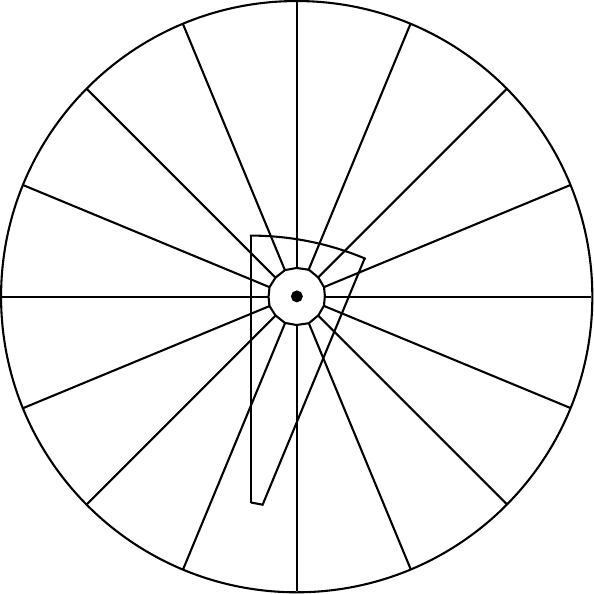}
\\
\includegraphics[height=1.5cm]{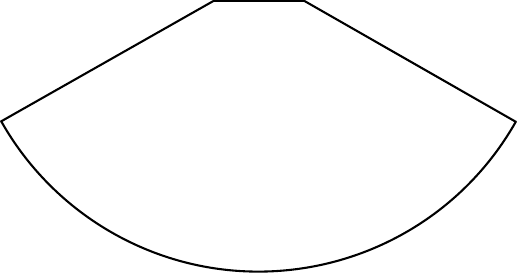}
&
\includegraphics[height=1.5cm]{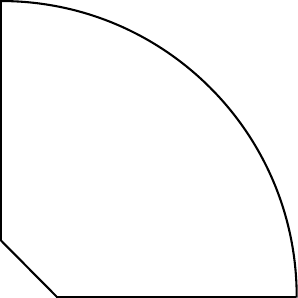}
&
\includegraphics[height=1.5cm]{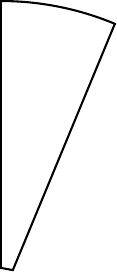}
\\
$n=4$ & $n=5$ & $n=17$
\end{tabular}}
\caption{Covering a disk by $n \greq 4$ congruent subsets}
\label{figCoverDiskGen}
\end{figure}

The case $n=3$ is slightly different: the sets are not convex and not 1-connected, each one has a circular hole in one of two symmetric segments it consists of.
At Fig. \ref{figCoverDisk3}, $\angle AOB = 150^{\circ}$ (for instance).
We do not know is there any such covering by three 1-connected congruent subsets.

\begin{figure}[h]
\centerline{
\begin{tabular}{ccc}
\includegraphics[width=3.6cm]{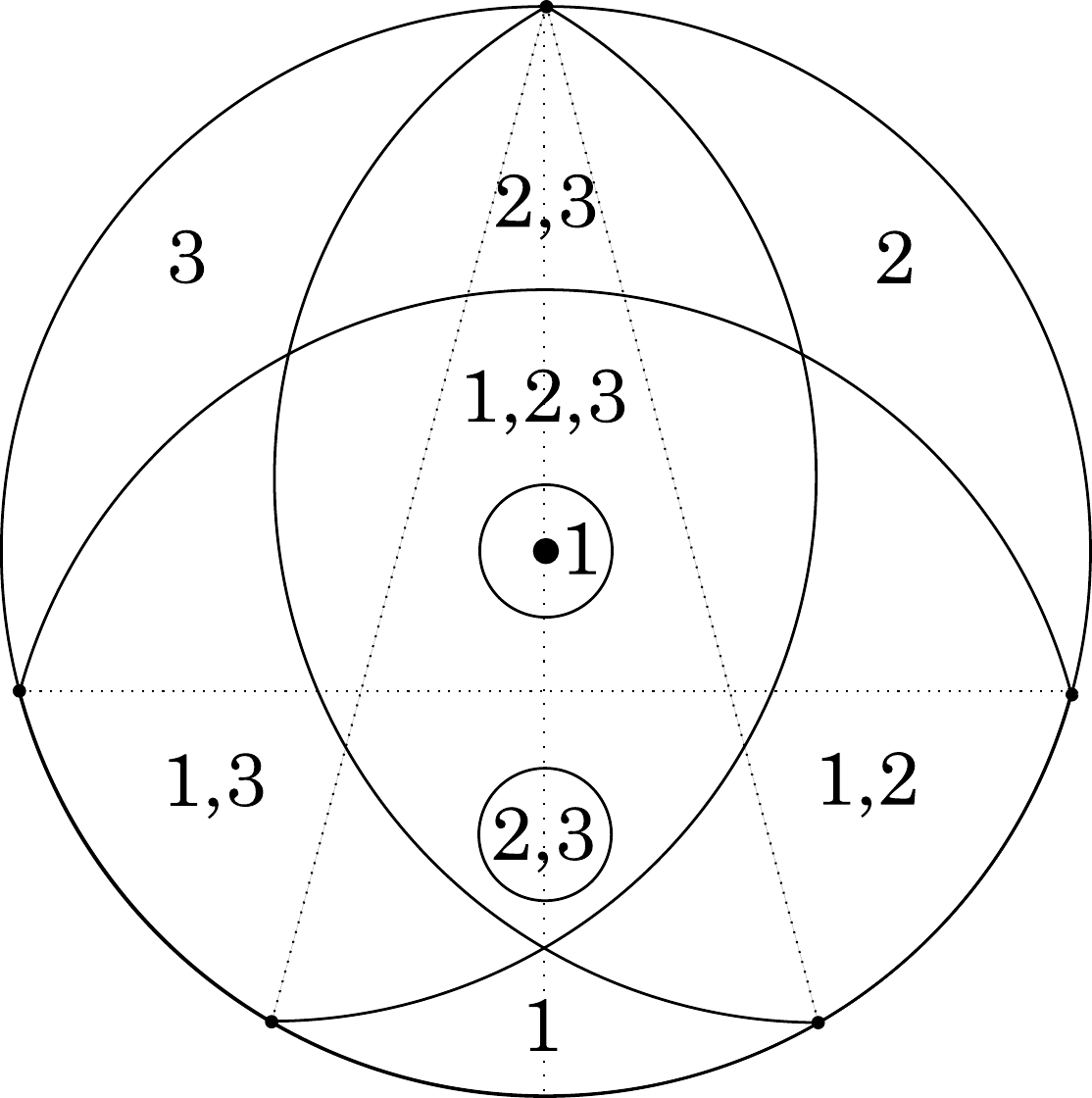}
&\quad\quad&
\includegraphics[width=3.5cm]{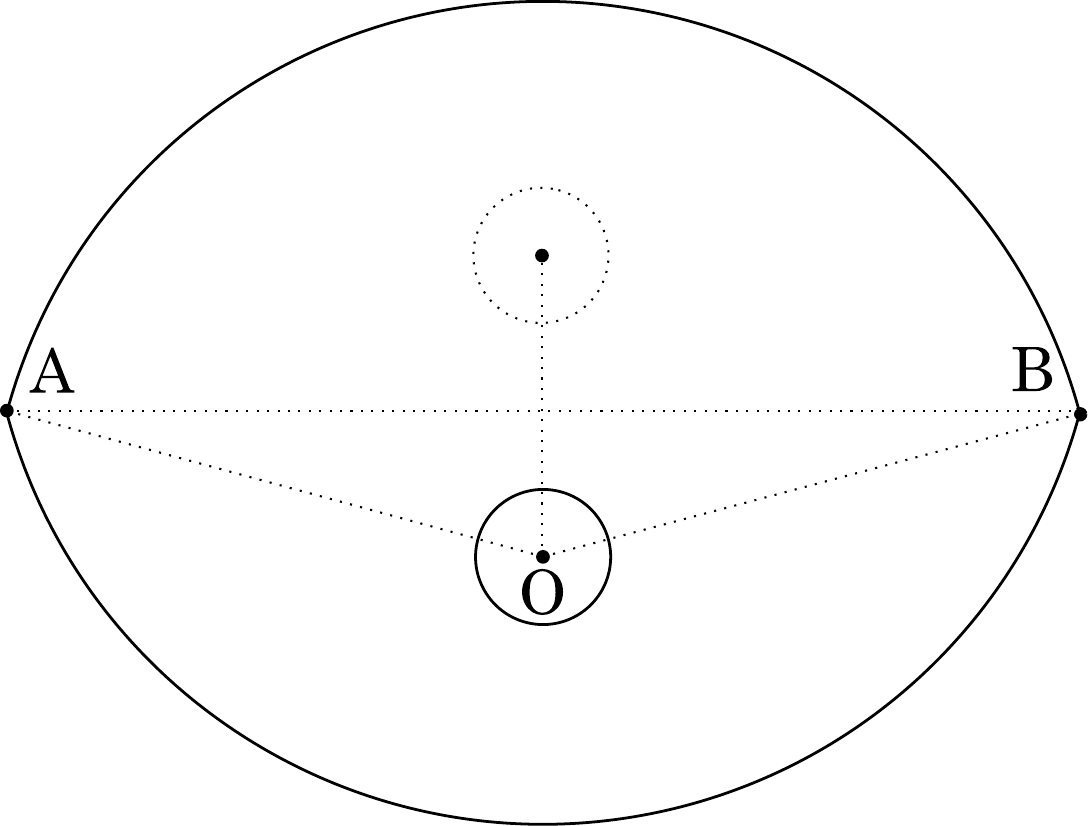}
\end{tabular}}
\caption{Covering a disk by $n=3$ congruent subsets (resembles ``Biohazard'' symbol)}
\label{figCoverDisk3}
\end{figure}

In fact, the case $n=k$ can be extended to all $n > k$ (which makes Fig. \ref{figCoverDiskGen} redundant),
because covering allows $A_i = A_j$ (not so for dissection): take $A_1$, ..., $A_{k-1}$, and $A_k = A_{k+1} = ... = A_n$.

Similar constructions can be used in $\mathbb{R}^m$. In particular, when $n = m + 2$,
note that the ``hollow'' around the centre at Fig. \ref{figCoverDiskGen}, case $n=4$, is an equilateral triangle and a 2-simplex in $\mathbb{R}^2$.
\end{example}

{\ }

\begin{remark}
Convexity of parts implies the negative answer not only to the original dissection problem, but also to its generalization:
the closed disk in $\mathbb{R}^2$ cannot be dissected into $n \greq 2$ homothetic, convex, and closed parts such that the interior of exactly one part contains the centre.

\begin{proof}
Let $K = \ovline{B}(\theta, 1)$, $S = S(\theta, 1)$ in $\mathbb{R}^2$, and let $\{A_i\}_{i=1}^n$ be the parts, so that $K=\bigcup\limits_{i=1}^n A_i$,
$A_i \sim A_j$, $\intr A_i \cap \intr A_j = \nullset$ for $i \ne j$.
Also, let $\partial A_i = \ovline{A_i} \cap \ovline{\mathbb{R}^2 \bsl A_i} \subset A_i$ be the boundary of $A_i$.

1) Claim: if $\partial A_i$ contains $2n+4$ different points $x_1$, ..., $x_{2n+4}$ that belong to some circle $S(a, r)$,
then $S(a,r) = S$.
(``The strictly convex section of $\partial A_i$ has to be on $\partial K = S$, not inside $K$.'')

To show that this claim is true, assume the contrary: $\exists x_j \notin S$. Let $N_+$ be the number of $x_j \in S$, and $N_- = \bigl|\{ x_j\colon x_j \notin S \}\bigr|$.
$N_+ + N_- = 2n + 4$.
If $N_+ \greq 3$, then 3 points that $\in S$ among $x_1$, ..., $x_{2n+4}$ determine the circle $S(a, r)$ uniquely (see e.g. \cite[2.3, Cor. 7]{agricola2008}),
so $S(a,r) = S$, which contradicts the assumption.
Thus $N_+ \lesq 2$ $\Rarr$ $N_- \greq 2n+2$: we can take $2n+2$ points on $S(a,r)$ in $\intr K = B(\theta, 1)$.
Enumerate them sequentially, for instance, counter-clockwise: $x_1'$, ..., $x_{2n+2}'$.

\begin{figure}[h]
\centerline{\includegraphics[width=3.5cm]{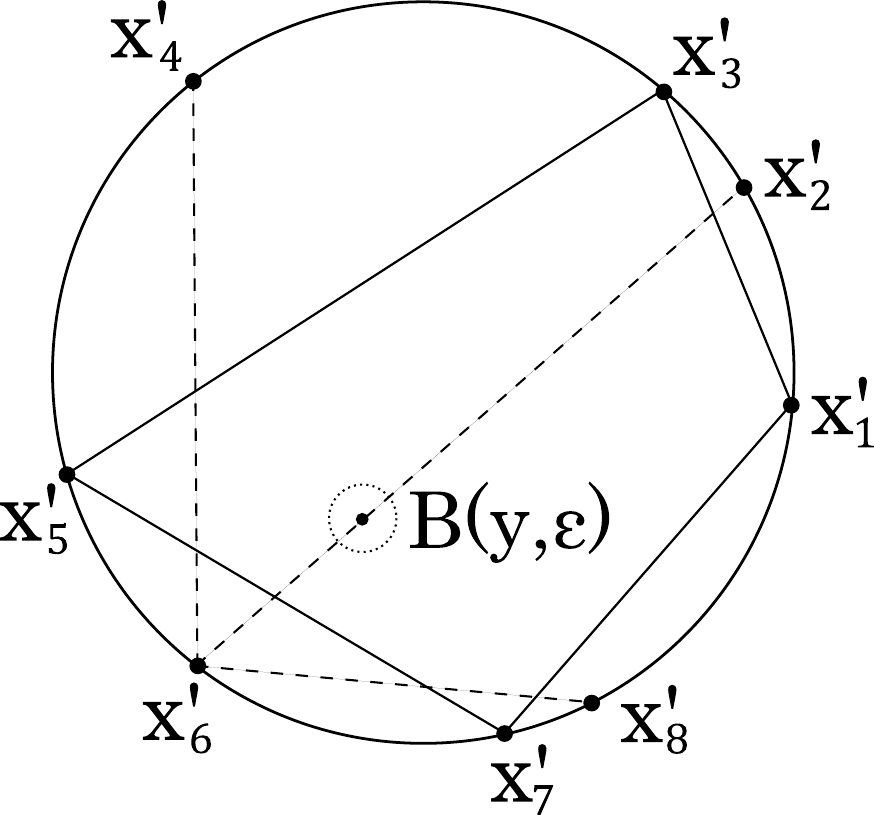}}
\label{figDissDiskConvex}
\end{figure}

Let $x_1' x_3' ... x_{2n+1}'$ be the convex polygon, with interior, inscribed into $S(a,r)$;
it follows from convexity of $A_i$ and $n+1\greq 3$ that $x_1' x_3' ... x_{2n+1}' \subseteq A_i$ and $\nullset \ne \intr x_1' x_3' ... x_{2n+1}' \subseteq \intr A_i$.
Consider the rest of points: $x_2'$, $x_4'$, ..., $x_{2n+2}'$.
Since $x_j' \in \partial A_i \cap \intr K$, each of these $n+1$ points belongs to the boundary $\partial A_{k_j}$ of at least one other part, $k_j \ne i$.
There are $n-1$ other parts, hence two of these points, $x_{(1)}' $ and $x_{(2)}'$, belong to the boundary of the same $A_k$, $k \ne i$.
Then $[x_{(1)}', x_{(2)}'] \subset A_k$.

It is easy to see that $[x_{(1)}', x_{(2)}']$ intersects $\intr x_1' x_3' ... x_{2n+1}'$, hence $\intr A_i \cap \intr A_k \ne \nullset$
(because $\forall y \in [x_{(1)}', x_{(2)}']$, $\forall B(y, \veps)$: $B(y,\veps) \cap \intr A_k \ne \nullset$), --- a contradiction.

2) $\bigcup\limits_{i=1}^n (A_i \cap S) = S$ and $|S| > \aleph_0$ imply $\exists A_i$: $\partial A_i \supseteq A_i \cap S \supseteq \{ x_1, ..., x_{2n+4} \}$.
Consequently, $\partial A_k = f_{ik}(\partial A_i)$ of any other $A_k$ contains $x^{(k)}_j = f_{ik}(x_j)$, $j=\ovline{1,2n+4}$,
which belong to some circle $S(a, r) = f_{ik}(S)$; here $f_{ik}\colon X \lrarr X$, $\| f_{ik} (x) - f_{ik}(y) \| = \alpha_{ik} \| x - y \|$ is the homothety transforming $A_i$ to $A_k$.
By (1), $x^{(k)}_j \in S$ for any $j$ and $k$.

3) Now assume that there's exactly one part $A_{i_0}$ such that $\theta \in \intr A_{i_0}$: $B(\theta, \delta) \subseteq A_{i_0}$.

$\{ x^{(i_0)}_j \}_{j=1}^{2n+4} \subseteq S \cap \partial A_{i_0}$, and the point $\theta \in A_{i_0}$ is at distance 1, equidistant, from each $x^{(i_0)}_j$.

Take any $k \ne i_0$. As above, $\{y^{(k)}_j = f_{i_0 k}(x^{(i_0)}_j)\}_{j=1}^{2n+4} \subseteq S \cap A_k$.
And there must be $z \in A_k$, which is equidistant from each $y^{(k)}_j$; clearly, $z = \theta$ ($y^{(k)}_1$, $y^{(k)}_2$, $y^{(k)}_3$ determine it uniquely).
Also, $B(\theta, \delta) \subseteq A_k$ (apply similar arguments to $\forall x \in B(\theta, \delta) \subseteq A_{i_0}$),
so $\theta \in \intr A_k$. A contradiction.
\end{proof}

(1st step shortens if we assume that one of $\partial A_i$ contains the arc $\breve{a}$, which has to be on $S$ then,
otherwise 2 internal points of $\breve{a}$ are in $\partial A_k$, $k\ne i$, too, implying a contradiction.)
\end{remark}

{\ }

\begin{example}\label{exmpUnivInfCover}
Without the upper bound for the cardinal number of covering,
there is a ``universal'' covering of $\ovline{B}(\theta, 1)$ such that the interior of exactly one subset contains the centre:
let $\mathcal{C} = \{ A_{\theta} \} \cup \bigl\{ A_y \bigr\}_{y \in S(\theta, \frac{1}{2})}$, where $A_{\theta} = \ovline{B}(\theta, \frac{1}{2})$ and
$A_y = \ovline{B}(y, \frac{1}{2})$.
Indeed, $\theta \in \intr A_{\theta} = B(\theta, \frac{1}{2})$, while $\forall y \in S(\theta, \frac{1}{2})$: $\theta \notin \intr A_y = B(y, \frac{1}{2})$,
and for $\forall x \in \ovline{B}(\theta, 1) \bsl \{ \theta \}$ we take $y_x = \frac{1}{2 \| x \|} x \in S(\theta, \frac{1}{2})$,
then $\| x - y_x \| = |1 - \frac{1}{2 \|  x\|}| \cdot \| x \| = \bigl| \| x \| - \frac{1}{2} \bigr| \lesq \frac{1}{2}$, thus $x \in A_{y_x}$.
Certainly, $A_i \cong A_j$.

This covering doesn't require NCS or $\dim X > 1$; meanwhile $\dim X > 1$ implies $| \mathcal{C} | > \aleph_0$.
\end{example}

{\ }

\begin{example}\label{exmpHilbertCountCover}
Consider the Hilbert space over $\mathbb{R}$, $X = H = l_2$, and its closed unit ball $\ovline{B_H} = \ovline{B}(\theta, 1)$, unit sphere $S_H = S(\theta, 1)$.
We claim that there is a countable covering of $\ovline{B_H}$ by its congruent and convex subsets $\{ A_i \}$ such that the interior of exactly one set contains the centre.

\begin{proof}
It's a direct corollary of Lemmas \ref{lmCountDenseSphGeod}, \ref{lmOmtdCongr}--\ref{lmOmtdConvex}:

1) Lemma \ref{lmCountDenseSphGeod} and Lemma \ref{lmOmtdCoverBall} along with Lemma \ref{lmOmtdCongr} provide
the countable covering of $\ovline{B_H}$ by congruent ommatidiums $A_i = C(\theta, d_i, \frac{\pi}{4})$, $i \in \mathbb{N}$, where $d_i \in S_H$.
By Lemma \ref{lmOmtdOriginNonIntr}, $\theta \notin \intr A_i$.

2) Then Lemma \ref{lmOmtdInsideBall} allows to add the ommatidium $A_0 = C(-\frac{1}{2}d_1, \frac{1}{2}d_1, \frac{\pi}{4})$,
which is contained in $\ovline{B_H}$ since $\frac{\pi}{4} < \arccos\frac{1}{4}$ and congruent with $A_i$ by Lemma \ref{lmOmtdCongr}.

3) Finally, by Lemma \ref{lmOmtdMiddleInt}, $\theta = \frac{1}{2}\bigl( -\frac{1}{2}d_1 + \frac{1}{2} d_1 \bigr) \in \intr A_0$.

By Lemma \ref{lmOmtdConvex}, $A_i$ are convex.

$|\{ A_i \}_{i \in \mathbb{N} \cup \{ 0\}}| = \aleph_0 = \dim H$.
\end{proof}

This covering somewhat resembles those from Fig. \ref{figCoverDiskGen}, except that
a)~the sets intersect ``a~lot'', b)~there's no ``hollow'' at the centre (corrigible by erasing sufficiently small neighborhood of template ommatidium's origin), and
c)~it's infinite-dimensional.
\end{example}

{\ }

The covering problem turns out to be easier about ``positive'' results than the problems of dissection and partition types.

\end{document}